\title{From semigroups to subelliptic estimates for quadratic operators}
\author{Michael Hitrik \\\small Department of Mathematics \\\small University of California
\\\small Los Angeles \\\small CA 90095-1555, USA\\\small hitrik@math.ucla.edu \and
Karel Pravda-Starov\\\small IRMAR, CNRS UMR 6625 \\\small Universit\'e de Rennes 1\\\small Campus de Beaulieu\\\small 263 avenue du G\'en\'eral Leclerc, CS 74205
\\\small 35042 Rennes Cedex, France\\\small karel.pravda-starov@univ-rennes1.fr \and
Joe Viola \\\small Laboratoire de Math\'ematiques Jean Leray\\\small Universit\'e de Nantes\\\small 2, rue de la Houssini\`ere \\\small BP 92208,
44322 Nantes Cedex 3, France\\\small Joseph.Viola@univ-nantes.fr}
\date{}
\def\wrtext#1{\relax\ifmmode{\leavevmode\hbox{#1}}\else{#1}\fi}
\def\abs#1{\left|#1\right|}
\def\begeq{\begin{equation}}
\def\endeq{\end{equation}}
\newcommand{\eps}{\varepsilon}
\def\part#1{\frac{\partial}{\partial #1}}
\def\norm#1{||\,#1\,||}
\newcommand{\real}{\mbox{\bf R}}
\newcommand{\comp}{\mbox{\bf C}}
\newcommand{\nat}{\mbox{\bf N}}
\renewcommand{\exp}{\mbox{\rm exp\,}}
\newtheorem{dref}{Definition}[section]
\newtheorem{theo}[dref]{Theorem}
\newtheorem{prop}[dref]{Proposition}
\newenvironment{proof}{\vspace{.3cm}\noindent{{\em Proof:}}}{\hfill$\Box$}
\begin{document}

\maketitle

\vspace*{1cm}
\noindent
{\bf Abstract}: Using an approach based on the techniques of FBI transforms, we give a new simple proof of the global subelliptic estimates for
non-selfadjoint non-elliptic quadratic differential operators, under a natural averaging condition on the Weyl symbols of the operators, established by
the second author~\cite{PS}. The loss of the derivatives in the subelliptic estimates depends directly on algebraic properties of the Hamilton maps of the quadratic
symbols. Using the FBI point of view, we also give accurate smoothing estimates of Gelfand-Shilov type for the associated heat semigroup in the limit of small
times.

\vskip 2.5mm
\noindent {\bf Keywords and Phrases:} Non-selfadjoint operator, subelliptic estimate, quadratic differential operator, heat semigroup, FBI transform.

\vskip 2mm
\noindent
{\bf Mathematics Subject Classification 2000}: 35A22, 35H20, 47D06, 53D22

%

\tableofcontents
\section{Introduction and statement of results}
\setcounter{equation}{0}
There has recently been a large number of new developments for non-selfadjoint quadratic differential operators. Here some of the motivation comes
from the analysis of second-order operators of Kramers-Fokker-Planck type, where non-selfadjoint non-elliptic quadratic operators often arise as local
models via harmonic oscillator approximation,~\cite{HelfferNier},~\cite{HeSjSt},~\cite{HPS13},~\cite{Nier}. The recent results in the quadratic case include
precise bounds on the resolvent and spectral projections,~\cite{HiSjVi},~\cite{Viola}, determination of spectra for partially elliptic operators~\cite{HPS09},
as well as smoothing and decay estimates for the corresponding semigroup in the limit of large times,~\cite{HPS09},~\cite{OPaPS},~\cite{AlVi},~\cite{GM}. Of
particular relevance for the present paper is the work~\cite{PS} by the second author, where global subelliptic estimates are established for the class of non-selfadjoint non-elliptic quadratic
differential operators, whose Weyl symbols enjoy certain dynamical averaging properties, studied in~\cite{HPS09}. The purpose of this work is to develop a
new time-dependent approach to the results of~\cite{PS}. When doing so, we shall also establish a precise form of the Gelfand-Shilov regularizing property
for the associated quadratic semigroup, in the small time limit. Let us proceed now to describe the assumptions, state the results, and outline the main
ideas of the proofs.

\bigskip
\noindent
Let $q$ be a complex valued quadratic form on the phase space,
\begeq
\label{eq1.1}
q: \real^n_x\times \real^n_{\xi} \to \comp,\quad (x,\xi)\mapsto q(x,\xi).
\endeq
We shall assume throughout the following discussion that the quadratic form ${\rm Re}\, q$ is positive semi-definite,
\begeq
\label{eq1.41}
{\rm Re}\,q(x,\xi) \geq 0,\quad (x,\xi)\in \real^{2n}.
\endeq
Associated to $q$ is the Hamilton map,
\begeq
\label{eq1.42}
F: \comp^{2n}\rightarrow \comp^{2n},
\endeq
defined by the identity
\begeq
\label{eq1.4}
q(X,Y) = \sigma(X,FY),\quad X,Y\in \comp^{2n}.
\endeq
Here $\sigma$ is the complex symplectic form on $\comp^{2n}$ and the left hand side is the polarization of $q$, viewed as a symmetric bilinear form on
$\comp^{2n}$. We notice that the Hamilton map $F$ is skew-symmetric with respect to $\sigma$.

\bigskip
\noindent
Following~\cite{HPS09},~\cite{PS}, let us introduce the so called singular space associated to $q$,
\begeq
\label{eq1.5}
S = \Big(\bigcap_{j=0}^{2n-1}\textrm{Ker}\big[\textrm{Re }F(\textrm{Im }F)^j \big]\Big) \bigcap \real^{2n}.
\endeq
As established in \cite[Section 2]{HPS09}, an equivalent description of the linear subspace $S \subset \real^{2n}$ can be given as follows,
\begeq
\label{eq1.6}
S= \Big\{X \in \real^{2n};\,\,H_{\textrm{Im}\, q}^k\textrm{Re }q(X)=0,\,\, k \in \nat\Big\}.
\endeq
Here $H_f = f'_{\xi}\cdot \partial_x - f'_x\cdot \partial_{\xi}$ is the Hamilton vector field of a function $f\in C^1(\real^{2n}_{x,\xi};\real)$.

\medskip
\noindent
Throughout this work, it will be assumed that the singular space of $q$ is trivial,
\begeq
\label{eq1.51}
S = \{0\}.
\endeq
It was shown in~\cite[Proposition 2.0.1]{HPS09} that the assumption (\ref{eq1.51}) implies that for each $T>0$, the quadratic form
$$
\real^{2n} \ni X \mapsto \int_0^T {\rm Re}\, q\left(\exp(tH_{{\rm Im}\,q})(X)\right)\,dt
$$
is positive definite.

\bigskip
\noindent
In view of (\ref{eq1.51}), we may introduce $0\leq k_0 \leq 2n-1$ to be the smallest integer such that
\begeq
\label{eq1.7}
\Big(\bigcap_{j=0}^{k_0}\textrm{Ker}\big[\textrm{Re }F(\textrm{Im }F)^j \big]\Big) \bigcap \real^{2n} = \{0\}.
\endeq
Let us recall from~\cite[Section 2]{HPS09},~\cite{HPS13} that (\ref{eq1.7}) implies the following subelliptic condition for the quadratic symbol $q$:
for each $0\neq X\in \real^{2n}$, there exists an integer $j\in \{0,\ldots, k_0\}$ such that
\begeq
\label{eq1.71}
{\rm Re}\,q\left(\exp(tH_{{\rm Im}\,q})(X)\right) = at^{2j} + {\cal O}(t^{2j+1}),\quad t\rightarrow 0,
\endeq
where $a = (H_{{\rm Im}\,q}^{2j}{\rm Re}\,q)(X)/(2j)! > 0$.

\bigskip
\noindent
The following result was established in~\cite{PS}, where we write
\begeq
q^w(x,D_x) u(x) = \frac{1}{(2\pi)^n}\int\!\!\!\int e^{i(x-y)\cdot\xi} q\left(\frac{x+y}{2},\xi\right)u(y)\,dy\,d\xi,\quad u\in {\cal S}(\real^n),
\endeq
for the Weyl quantization of $q$. Before stating it, let us introduce the unbounded selfadjoint operator
$$
\langle{(x,D_x)\rangle}^r = \left(1+x^2¨+D_x^2\right)^{r/2},\quad r > 0,
$$
defined in terms of the functional calculus for the selfadjoint operator $D_x^2+x^2$ on $L^2(\real^n)$. Here $D_x = i^{-1}\partial_x$.

\begin{theo}
\label{theo_main}
Let $q: \real^n_x\times \real^n_{\xi} \to \comp$ be a quadratic form with ${\rm Re}\,q\geq 0$, such that {\rm (\ref{eq1.51})} holds. Let
$k_0\in \{0,\ldots, 2n-1\}$ be the smallest integer such that
$$
\Big(\bigcap_{j=0}^{k_0}{\rm Ker}\,\big[{\rm Re}\,F({\rm Im}\,F)^j \big]\Big) \bigcap \real^{2n} = \{0\}.
$$
Then there exists $C>0$ such that for all $u\in {\cal S}(\real^n)$, we have
\begeq
\label{eq1.8}
\norm{\langle{(x,D_x)\rangle}^{2/(2k_0+1)}u}_{L^2({\bf R}^n)} \leq C\left(\norm{q^w(x,D_x)u}_{L^2({\bf R}^n)} + \norm{u}_{L^2({\bf R}^n)}\right).
\endeq
\end{theo}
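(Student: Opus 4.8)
The plan is to exploit the heat semigroup $e^{-tq^w(x,D_x)}$, $t \geq 0$, which is well defined on $L^2(\real^n)$ by the assumption ${\rm Re}\, q \geq 0$ (so that $q^w$ generates a strongly continuous contraction semigroup), and to derive the subelliptic estimate \eqref{eq1.8} from a small-time smoothing bound of the form
\begeq
\label{eq:sketch-smooth}
\norm{\langle (x,D_x)\rangle^{2/(2k_0+1)} e^{-tq^w(x,D_x)} u}_{L^2} \leq \frac{C}{t} \norm{u}_{L^2}, \qquad 0 < t \leq 1.
\endeq
Granting \eqref{eq:sketch-smooth}, one writes, for $v = e^{-tq^w} u_0$, the identity $v(t) - v(0) = -\int_0^t q^w e^{-sq^w} u_0\, ds$ together with the spectral decomposition of $\langle(x,D_x)\rangle$ to interpolate; more directly, for $u \in {\cal S}(\real^n)$ set $f = q^w u$, so that $e^{-tq^w} u = u - \int_0^t e^{-sq^w} f\, ds$, apply $\langle(x,D_x)\rangle^{2/(2k_0+1)}$, use \eqref{eq:sketch-smooth} on the left with a Duhamel-type argument, and absorb. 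The standard way to package this (as in the semigroup approach to subellipticity) is to note that \eqref{eq:sketch-smooth} is equivalent, by the Beppo Levi / real-interpolation characterization of domains of generators, to the statement that ${\rm Dom}(q^w) \hookrightarrow {\rm Dom}(\langle(x,D_x)\rangle^{2/(2k_0+1)})$ with the quantitative bound \eqref{eq1.8}; this is a soft functional-analytic step once \eqref{eq:sketch-smooth} is in hand.

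The analytic core is therefore \eqref{eq:sketch-smooth}, and here the FBI / metaplectic-FBI approach enters. The idea is that $q^w(x,D_x)$ is conjugated, via an appropriate FBI (Bargmann-type) transform adapted to a suitable strictly plurisubharmonic weight, to a model operator on a weighted space of holomorphic functions whose symbol is again quadratic, and whose real part controls a power of $|z|^2$ after evolving for a small time. Concretely, the semigroup $e^{-tq^w}$ is a metaplectic-type Fourier integral operator whose canonical transformation is $\exp(-2it F)$ acting on $\comp^{2n}$; one tracks the associated complex-linear canonical transformation and the resulting quadratic weight function $\varphi_t$ on $\comp^n$. The subelliptic gain is then read off from the positivity of the time-averaged form
$$
\real^{2n} \ni X \mapsto \int_0^T {\rm Re}\, q\big(\exp(tH_{{\rm Im}\,q})(X)\big)\, dt,
$$
which is positive definite by \cite[Proposition 2.0.1]{HPS09}, combined with the vanishing order estimate \eqref{eq1.71}: since for each unit vector $X$ one has ${\rm Re}\, q(\exp(tH_{{\rm Im}\,q})(X)) \geq c\, t^{2k_0}$ for $t$ in a suitable range, integrating in $t$ over $[0,\lambda^{-1/(2k_0+1)}]$ and rescaling by the dilation $(x,\xi) \mapsto (\lambda^{1/2} x, \lambda^{1/2}\xi)$ produces a lower bound of size $\lambda^{2/(2k_0+1)}$ on the relevant quadratic form, which is exactly the exponent in \eqref{eq1.8}. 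In the FBI picture this manifests as the statement that, after conjugation, the weight $\varphi_t$ satisfies $\varphi_0 - \varphi_t \gtrsim t^{2k_0+1} |z|^2$ near $z = 0$ (and globally, with the correct homogeneity), which upon the change of variables $z \mapsto t^{-1/2} z$ and choosing $t \sim \lambda^{-1/(2k_0+1)}$ yields \eqref{eq:sketch-smooth}.

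In carrying this out, the steps in order are: (i) set up the global FBI transform and identify $e^{-tq^w}$ as a weighted-space FIO, recording its phase and the induced family of real canonical transformations $\exp(tH_{{\rm Im}\,q})$ together with the accumulated positive contribution of ${\rm Re}\, q$; (ii) prove the key quadratic lower bound $\int_0^T {\rm Re}\, q(\exp(tH_{{\rm Im}\,q})(X))\, dt \geq c\, T^{2k_0+1} |X|^2$ for $0 < T \leq 1$, using \eqref{eq1.71}, the compactness of the unit sphere, and homogeneity (this is the one genuinely quantitative estimate, and it is where the exponent $2/(2k_0+1)$ is born); (iii) translate this into the weight inequality and hence \eqref{eq:sketch-smooth}; and (iv) perform the Duhamel/interpolation step to pass from the semigroup smoothing bound to the resolvent-type estimate \eqref{eq1.8}. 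The main obstacle I expect is step (ii) together with its clean transcription into a statement about the weight $\varphi_t$ uniformly for small $t$ — one must be careful that the constants in the vanishing-order estimate \eqref{eq1.71} can be made uniform over the unit sphere (which follows from compactness and the fact that $k_0$ is defined as a maximum over directions of the vanishing order) and that the FBI conjugation does not degrade the gain; the functional-analytic step (iv) and the FIO bookkeeping in (i), while technical, are essentially routine in this framework.
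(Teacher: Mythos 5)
Your overall route is the same as the paper's: conjugate by a metaplectic FBI transform, view $e^{-tq^w}$ as a quadratic Fourier integral operator between weighted spaces $H_{\Phi_t}$, and extract the exponent from the lower bound $\int_0^t {\rm Re}\,q\left(\exp(sH_{{\rm Im}\,q})(X)\right)ds \geq t^{2k_0+1}\abs{X}^2/C$, proved by the vanishing-order estimate (\ref{eq1.71}), compactness of the sphere and homogeneity. Your step (ii) is exactly the paper's Proposition 2.2, and the weight inequality $\Phi_0 - \widetilde{\Phi}_t \gtrsim t^{2k_0+1}\abs{x}^2$ is the paper's Theorem 2.3 (equivalently (\ref{eq2.30})). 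So the analytic core is correctly identified.

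The genuine gap is in step (iv). Your smoothing bound $\norm{\langle{(x,D_x)\rangle}^{2/(2k_0+1)} e^{-tq^w}}_{{\cal L}(L^2,L^2)} \leq C/t$ is true (it is the fractional-power version of (\ref{eq1.9})), but it is exactly the non-integrable endpoint: the Duhamel identity $u = e^{-tq^w}u + \int_0^t e^{-sq^w}q^w u\,ds$ gives, after applying $\Lambda^{2\delta}$ with $\delta = 1/(2k_0+1)$, a term $\int_0^t \norm{\Lambda^{2\delta}e^{-sq^w}q^w u}\,ds \leq \int_0^t Cs^{-1}\,ds\,\norm{q^wu}$, which diverges logarithmically. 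For the same reason the claimed "equivalence, by the real-interpolation characterization of domains of generators" is false at this endpoint: a bound $\norm{Ae^{-tB}}\leq Ct^{-\alpha}$ yields ${\cal D}(B)\subset {\cal D}(A)$ for $\alpha<1$ but not in general for $\alpha=1$ (which is why the self-adjoint case $A=B$ is only borderline consistent). The paper avoids this by a $T^*T$-type splitting: it estimates the sesquilinear form $(Q^{-1}u,v)$ via $Q^{-1}=\int_0^\infty e^{-tQ}dt$, writes $(e^{-tQ}u,v) = (e^{-tQ/2}u, e^{-tQ^*/2}v)$, and applies Cauchy--Schwarz in $(t,x)$ jointly. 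Each factor then gains only $\delta$ (not $2\delta$) powers of $\langle{x\rangle}$, coming from the convergent integral $\int_0^{t_0} e^{-t^{2k_0+1}\abs{x}^2/C}\,dt \leq {\cal O}(1)\langle{x\rangle}^{-2/(2k_0+1)}$ — i.e.\ the gain per factor costs only $t^{-1/2}$ in an $L^2_t$-averaged sense, which is square-integrable. This produces $\norm{\Lambda^{\delta}(q^w)^{-1}u}\leq C\norm{\Lambda^{-\delta}u}$, and the full gain of $2\delta$ is only recovered at the very end by taking $v=\Lambda^{\delta}u$ and controlling the commutator $\Lambda^{-\delta}[q^w,\Lambda^{\delta}]$. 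To repair your argument you need this symmetric distribution of the gain between $e^{-tQ/2}$ and its adjoint (or some equivalent device); the one-sided $C/t$ bound alone does not suffice.
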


\medskip
\noindent
The global subelliptic estimate (\ref{eq1.8}) has turned out to be crucial in~\cite{OPaPS}, in particular, when deriving sharp resolvent estimates for
$q^w(x,D_x)$ in suitable parabolic neighborhoods of the imaginary axis and when showing the exponential rate of convergence to equilibrium for the associated
semigroup. The proof of Theorem 1.1 given in~\cite{PS} is quite technical and is based on a delicate construction of a bounded real multiplier $G \in C^{\infty}(\real^{2n})\cap L^{\infty}(\real^{2n})$
such that
$$
{\rm Re}\, q(X) + \frac{1}{C}H_{{\rm Im}\,q}G(X) +1 \geq \frac{1}{C} \langle{X\rangle}^{\frac{2}{2k_0+1}},\quad X\in \real^{2n}.
$$
Here $\langle{X\rangle} = \left(1 + \abs{X}^2\right)^{1/2}$. We remark that the existence of the multiplier $G$ has been a
key ingredient in the proof of subelliptic estimates for some classes of non-selfadjoint $h$-pseudodifferential operators with double characteristics
in~\cite{HPS13}.

\medskip
\noindent
In this paper, we shall give a new and simple proof of Theorem \ref{theo_main}, based on the study of the heat semigroup generated by the
quadratic differential operator $q^w = q^w(x,D_x)$, viewed as a Fourier integral operator with a quadratic complex phase. Indeed, rather than studying
the semigroup directly, following~\cite{HeSjSt},~\cite{HPS09},~\cite{Sj10}, we first perform a metaplectic FBI transform and consider the semigroup generated by
the holomorphic quadratic operator $Q$, obtained by conjugating $q^w(x,D_x)$ by the inverse of the FBI transform. We may then view the operator $e^{-tQ}$
as a quadratic Fourier integral operator in the complex domain~\cite{Sj82},~\cite{CaGrHiSj}, which is bounded between
weighted spaces of holomorphic functions,
$$
e^{-tQ}: H_{\Phi_0}(\comp^n) \rightarrow H_{\Phi_t}(\comp^n), \quad 0 \leq t \leq t_0 \ll 1.
$$
Here $\Phi_0$ is a strictly plurisubharmonic quadratic form on $\comp^n$ and
$$
H_{\Phi_0}(\comp^n) = {\rm Hol}(\comp^n)\cap L^2(\comp^n; e^{-2\Phi_0}\, L(dx))
$$
is the image of $L^2(\real^n)$ under the FBI transform, with $L(dx)$ standing for the Lebesgue measure on $\comp^n$. The space $H_{\Phi_t}(\comp^n)$ is defined
similarly, for the quadratic form $\Phi_t \leq \Phi_0$, whose time evolution is governed by a real Hamilton-Jacobi equation --- see the discussion in Section 2.
In~\cite{HPS09} we established, as a consequence of (\ref{eq1.51}), that for all $t>0$ small enough, we have the strict inequality
$\Phi_t < \Phi_0$ on $\comp^n\setminus\{0\}$. Here, crucially, we are able to sharpen this bound and to show that the assumption (\ref{eq1.51}) actually
implies that
\begeq
\label{eq1.81}
\Phi_t(x) \leq \Phi_0(x) - \frac{t^{2k_0 +1}}{C}\abs{x}^2,\quad x\in \comp^n,
\endeq
for all $t\geq 0$ small enough. The estimate (\ref{eq1.81}) is established here very much following the techniques of~\cite{Sj10}, developed
when studying subelliptic resolvent estimates for non-selfadjoint $h$-pseudodifferential operators of principal type. In particular, rather than working directly
with the Hamilton-Jacobi equation for $\Phi_t$, as was done in~\cite{HPS09}, following~\cite{Sj10}, we apply the inverse of the canonical transformation associated to the FBI transform, to
replace the Hamilton-Jacobi equation for $\Phi_t$ by another, closely related, one, which becomes easier to handle. Let us also mention that for the quadratic
Kramers-Fokker-Planck operator, where we have $k_0 = 1$, the estimate (\ref{eq1.81}) has been proved in~\cite[Section 11]{HeSjSt}.

\medskip
\noindent
With the estimate (\ref{eq1.81}) available, the proof of Theorem \ref{theo_main} may be completed by writing
\begeq
\label{eq1.82}
(Q-z)^{-1} = \int_0^{\infty} e^{-t(Q-z)}\,dt,\quad {\rm Re}\, z < 0,
\endeq
and observing that the smoothing properties for the resolvent $(Q-z)^{-1}$, equivalent to (\ref{eq1.8}), may be derived from (\ref{eq1.81}), via (\ref{eq1.82}),
essentially by carrying out the $t$-integration.

\bigskip
\noindent
It turns out that the new method of proof of Theorem \ref{theo_main} leads also to some accurate smoothing estimates for the contraction semigroup $e^{-tq^w}$,
$t\geq 0$, in the limit of small times. Specifically, let us recall from~\cite[Theorem 1.2.1]{HPS09} that under the assumptions (\ref{eq1.41}) and (\ref{eq1.51}), we have for any $t > 0$,
$$
e^{-tq^w}: L^2(\real^n) \rightarrow {\cal S}(\real^n).
$$
Here ${\cal S}(\real^n)$ is the Schwartz space. The behavior of the Schwartz seminorms of $e^{-tq^w}u$, for $u\in L^2(\real^n)$, as $t\rightarrow 0^+$, is
given in the following result.

\begin{theo}
\label{theo_main2}
Let $q: \real^n_x\times \real^n_{\xi} \to \comp$ be a quadratic form with ${\rm Re}\,q\geq 0$, such that {\rm (\ref{eq1.51})} holds and let us define the
integer $k_0\in \{0,\ldots\,2n-1\}$ as in Theorem {\rm {\ref{theo_main}}}. There exist $C > 0$ and $t_0 > 0$ such that for all $t\in (0,t_0]$, and all
$N\in \nat$, we have
\begeq
\label{eq1.9}
\norm{(D_x^2 + x^2)^N e^{-tq^w}}_{{\cal L}(L^2({\bf R}^n), L^2({\bf R}^n))} \leq \frac{C^{N+1}N!}{t^{(2k_0+1)N}}.
\endeq
Furthermore, there exist $C>0$ and $t_0 > 0$ such that for all $u\in L^2(\real^n)$, all $\mu, \nu \in \nat^n$, and all $t\in (0,t_0]$,
we have
\begeq
\label{eq1.10}
\norm{x^{\mu}\partial_x^{\nu}e^{-tq^w}u}_{L^{\infty}({\bf R}^n)} \leq
\frac{C^{1 + \abs{\mu} + \abs{\nu}} \left(\mu!\right)^{1/2} \left(\nu!\right)^{1/2}}{t^{\frac{(2k_0+1)}{2}(\abs{\mu} + \abs{\nu}+2n+s)}}\norm{u}_{L^2({\bf R}^n)}.
\endeq
Here $s > n/2$ is a fixed integer.
\end{theo}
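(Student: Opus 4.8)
The plan is to transfer the problem to the FBI transform side and deduce both estimates from the sharpened weight bound (\ref{eq1.81}), which is the main analytic input and is supplied by Section~2. The metaplectic FBI transform $T$ realizes $L^2(\real^n)$ unitarily as $H_{\Phi_0}(\comp^n)$ and replaces $e^{-tq^w}$ by the holomorphic semigroup $e^{-tQ}=Te^{-tq^w}T^{-1}$, which maps $H_{\Phi_0}(\comp^n)$ into $H_{\Phi_t}(\comp^n)$ with norm ${\cal O}(1)$, uniformly for $0\le t\le t_0\ll 1$. Moreover $T$ intertwines the Weyl quantization of a polynomial of degree $m$ in $(x,\xi)$ with a differential operator on $\comp^n$ with polynomial coefficients of the same \emph{total order}, by which I mean the least $m$ for which the operator is a finite sum of monomials $x^{\gamma}\partial_x^{\delta}$ with $\abs{\gamma}+\abs{\delta}\le m$; this quantity is subadditive under composition. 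In particular ${\cal Q}:=T(D_x^2+x^2)T^{-1}$ has total order $2$, so ${\cal Q}^N$ has total order $\le 2N$, while ${\cal R}_{\mu,\nu}:=Tx^{\mu}\partial_x^{\nu}T^{-1}$ has total order $\abs{\mu}+\abs{\nu}$; and $\norm{(D_x^2+x^2)^N e^{-tq^w}}_{{\cal L}(L^2)}=\norm{{\cal Q}^N e^{-tQ}}_{{\cal L}(H_{\Phi_0})}$, $\norm{x^{\mu}\partial_x^{\nu}e^{-tq^w}u}_{L^2(\real^n)}=\norm{{\cal R}_{\mu,\nu}e^{-tQ}Tu}_{H_{\Phi_0}}$.

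Everything then reduces to one mapping estimate, which is the crux. \emph{Claim}: there is $C>0$ such that for every differential operator ${\cal P}$ on $\comp^n$ with polynomial coefficients, of total order at most $k$ and with coefficients of modulus $\le C_0^k$ (which holds for ${\cal Q}^N$ and ${\cal R}_{\mu,\nu}$), one has for $0<t\le t_0$
\begeq
\label{eq_plan_core}
\norm{{\cal P}}_{{\cal L}\left(H_{\Phi_t}(\comp^n),\,H_{\Phi_0}(\comp^n)\right)}\le \frac{C^{k+1}\,(k!)^{1/2}}{t^{(2k_0+1)k/2}}.
\endeq
To prove (\ref{eq_plan_core}) I would use (\ref{eq1.81}) in the form $\Phi_0(x)-\Phi_t(x)\ge t^{2k_0+1}\abs{x}^2/C$ and telescope along the real quadratic forms $\Lambda_{\theta}=(1-\theta)\Phi_t+\theta\Phi_0$, $\theta\in[0,1]$, so that $\Lambda_0=\Phi_t$, $\Lambda_1=\Phi_0$, and $\Lambda_{\theta}$ is strictly plurisubharmonic for $\theta>0$. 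For $\theta'<\theta$ one has $\Lambda_{\theta}-\Lambda_{\theta'}=(\theta-\theta')(\Phi_0-\Phi_t)\ge(\theta-\theta')t^{2k_0+1}\abs{x}^2/C$, and two elementary estimates hold: $\norm{x_j}_{{\cal L}(H_{\Lambda_{\theta'}},H_{\Lambda_{\theta}})}\le C\left((\theta-\theta')t^{2k_0+1}\right)^{-1/2}$, immediate from $e^{-2\Lambda_{\theta}}=e^{-2\Lambda_{\theta'}}e^{-2(\Lambda_{\theta}-\Lambda_{\theta'})}$ together with $\sup_x\abs{x}^2 e^{-a\abs{x}^2}={\cal O}(1/a)$; and $\norm{\partial_{x_j}}_{{\cal L}(H_{\Lambda_{\theta'}},H_{\Lambda_{\theta}})}\le C\left((\theta-\theta')t^{2k_0+1}\right)^{-1/2}$, from the Cauchy inequalities on polydiscs of radius a small multiple of $\left((\theta-\theta')t^{2k_0+1}\right)^{1/2}$ together with the sub-mean value property of the subharmonic function $\abs{v}^2$, the oscillation of the quadratic form $\Lambda_{\theta'}$ over such a polydisc being absorbed, after completing the square, into the Gaussian gain from (\ref{eq1.81}) at the cost of an absolute constant. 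Writing a monomial of ${\cal P}$ as a product of $p\le k$ of the factors $x_j$, $\partial_{x_j}$ and telescoping through the weights $\Lambda_{i/p}$, $i=0,\dots,p$, each of the $p$ steps costs $C\left(p/t^{2k_0+1}\right)^{1/2}$, so the monomial is bounded by $C^p p^{p/2}t^{-(2k_0+1)p/2}\le C^k k^{k/2}t^{-(2k_0+1)k/2}$; since $k^{k/2}\le(k!)^{1/2}e^{k/2}$ by Stirling's formula and ${\cal P}$ expands into at most $C^k$ such monomials, (\ref{eq_plan_core}) follows.

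Granting (\ref{eq_plan_core}) and composing with $e^{-tQ}\colon H_{\Phi_0}\to H_{\Phi_t}$, the choice ${\cal P}={\cal Q}^N$ (so $k=2N$) yields (\ref{eq1.9}), using $((2N)!)^{1/2}\le 2^N N!$; the choice ${\cal P}={\cal R}_{\mu,\nu}$ (so $k=\abs{\mu}+\abs{\nu}$) yields, using $((\abs{\mu}+\abs{\nu})!)^{1/2}\le C^{\abs{\mu}+\abs{\nu}}(\mu!)^{1/2}(\nu!)^{1/2}$, the $L^2$ bound $\norm{x^{\mu}\partial_x^{\nu}e^{-tq^w}u}_{L^2(\real^n)}\le C^{1+\abs{\mu}+\abs{\nu}}(\mu!)^{1/2}(\nu!)^{1/2}t^{-(2k_0+1)(\abs{\mu}+\abs{\nu})/2}\norm{u}_{L^2(\real^n)}$. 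To obtain the pointwise bound (\ref{eq1.10}) I would then invoke the Sobolev inequality $\norm{f}_{L^{\infty}(\real^n)}\le C\sum_{\abs{\alpha}\le s}\norm{\partial_x^{\alpha}f}_{L^2(\real^n)}$ for the fixed integer $s>n/2$, expand $\partial_x^{\alpha}(x^{\mu}\partial_x^{\nu})$ by the Leibniz rule into a sum of terms $x^{\mu'}\partial_x^{\nu'}$ with $\abs{\mu'}\le\abs{\mu}$ and $\abs{\mu'}+\abs{\nu'}\le\abs{\mu}+\abs{\nu}+s$, apply the $L^2$ bound just obtained to each, and observe that, since only multi-indices of length at most $s$ are differentiated away, the resulting combinatorial prefactors are bounded by $C^{\abs{\mu}+\abs{\nu}}(\mu!)^{1/2}(\nu!)^{1/2}$. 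This gives (\ref{eq1.10}), in fact with the marginally better exponent $\frac{2k_0+1}{2}(\abs{\mu}+\abs{\nu}+s)$ in place of $\frac{2k_0+1}{2}(\abs{\mu}+\abs{\nu}+2n+s)$.

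The step I expect to be the main obstacle is obtaining the \emph{sharp} constants in (\ref{eq_plan_core}). A crude one-step estimate from $H_{\Phi_t}$ directly to $H_{\Phi_0}$ --- Cauchy inequalities on a single polydisc of radius $\sim t^{(2k_0+1)/2}$ followed by the Gaussian weight gain --- loses an extra factor $t^{-(2k_0+1)k/2}$, giving only $t^{-(2k_0+1)k}$, and does not reproduce the correct factorial; it is precisely the subdivision into $p$ steps along the scale $\{\Lambda_{\theta}\}$ that repairs this. A related point requiring care is that the exponent in (\ref{eq1.9}) is $(2k_0+1)N$ and not $2(2k_0+1)N$, which hinges on the subadditivity of the total order, so that ${\cal Q}^N$ involves only $\sim 2N$ elementary factors and not the $\sim 4N$ one would naively expect of an order-$2N$ operator with degree-$2N$ coefficients. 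The remaining ingredients --- the ${\cal O}(1)$ boundedness of $e^{-tQ}$ from $H_{\Phi_0}$ to $H_{\Phi_t}$ and the identification of $\Phi_t$ via the real Hamilton--Jacobi flow --- are taken from Section~2.
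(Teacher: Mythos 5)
Your argument is correct in substance but takes a genuinely different route from the paper at the key quantitative step. The paper never estimates multiplication and differentiation operators directly between the weighted spaces. For (\ref{eq1.9}) it factors $Q_0^N e^{-tQ}= Q_0^N e^{-s(t)Q_0}\cdot e^{s(t)Q_0}e^{-tQ}$, where $Q_0$ is the FBI-side harmonic oscillator and $s(t)=t^{2k_0+1}/C_0$: the Hamilton--Jacobi flow for $q_0$ perturbs the weight only by ${\cal O}(s\abs{x}^2)$, so $e^{s(t)Q_0}$ maps $H_{\widetilde\Phi_t}(\comp^n)$ back into $H_{\Phi_0}(\comp^n)$ and exactly cancels the Gaussian gain of (\ref{eq1.81}); the factorial then comes for free from the selfadjoint functional calculus, $\sup_{\lambda\geq 0}\lambda^N e^{-s\lambda}\leq N!/s^N$. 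For (\ref{eq1.10}) the paper expands $e^{-tq^w}u$ in Hermite functions, converts the uniform boundedness of $e^{s(t)(D_x^2+x^2)}e^{-tq^w}$ into exponential decay of the Hermite coefficients, and invokes a lemma from~\cite{LMPSXu} on $\norm{x^{\mu}\partial_x^{\nu}\psi_{\alpha}}_{L^2}$; summing the resulting series over $\alpha$ costs ${\cal O}(t^{-(2k_0+1)n})$, which is precisely the $2n$ in the exponent of (\ref{eq1.10}). Your ladder lemma --- telescoping each elementary factor $x_j$, $\partial_{x_j}$ through the interpolated weights $\Lambda_{i/p}$, each step costing $C(p/t^{2k_0+1})^{1/2}$, with the sharp factorial recovered from $p^{p/2}\leq (p!)^{1/2}e^{p/2}$ --- replaces both mechanisms at once, and because it never passes through the Hermite basis it avoids the $2n$ loss, giving (\ref{eq1.10}) with exponent $\frac{2k_0+1}{2}(\abs{\mu}+\abs{\nu}+s)$; this matches the power of $t$ obtained by the direct methods of~\cite{HPSV1} while keeping the $S^{1/2}_{1/2}$ seminorms, so your conclusion is in fact slightly sharper than the stated theorem. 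The two elementary operator bounds, and the absorption of the oscillation of $\Lambda_{\theta'}$ over the Cauchy polydisc into half of the Gaussian gain, are sound.

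One step must be restated. Your Claim assumes that the coefficients of ${\cal P}$ in its normal-ordered expansion into monomials $x^{\gamma}\partial_x^{\delta}$ are ${\cal O}(C_0^k)$, and you assert this for ${\cal Q}^N$. That hypothesis is false in general: already in one variable, $(x\partial_x)^N=\sum_k S(N,k)\,x^k\partial_x^k$ with Stirling numbers of the second kind satisfying $S(N,k)\geq k^{N-k}$, so the coefficients are super-exponential in $N$ (and for a Bargmann-type $T$, ${\cal Q}$ is essentially of this form). Your bookkeeping --- (number of monomials) times (maximal coefficient) times (worst monomial norm) --- then does not close. The repair is immediate and uses only your own machinery: do not normal-order. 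Write ${\cal Q}$ as a sum of ${\cal O}(1)$ words of length $\leq 2$ in the letters $x_j,\partial_{x_j}$ with ${\cal O}(1)$ scalar coefficients, so that ${\cal Q}^N$ is a sum of $C^N$ words of length $\leq 2N$ with coefficients $\leq C^N$, and likewise ${\cal R}_{\mu,\nu}=\prod_j (Tx_jT^{-1})^{\mu_j}\prod_j(T\partial_{x_j}T^{-1})^{\nu_j}$ is a sum of $C^{\abs{\mu}+\abs{\nu}}$ words of length $\leq\abs{\mu}+\abs{\nu}$. Since the telescoping estimate applies verbatim to a word regardless of the order of its letters, the Claim and everything downstream survive with ``monomial'' read as ``word''.
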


\medskip
\noindent
Theorem \ref{theo_main2} implies that for any $t>0$ and $u\in L^2(\real^n)$, the function $e^{-tq^w}u$ belongs to the Gelfand-Shilov space
$S^{1/2}_{1/2}(\real^n)$, with the precise control of the Gelfand-Shilov seminorms, as $t\rightarrow 0^+$, described by the bounds (\ref{eq1.10}).
Here, following~\cite{LMPSXu}, we recall that a function $f\in C^{\infty}(\real^n)$ belongs to the Gelfand-Shilov
space $S^p_q(\real^n)$, with $p$, $q>0$, $p+q \geq 1$, if there exists a constant $C\geq 1$ such that for all $\mu,\nu \in \nat^n$, we have
$$
\norm{x^{\mu}\partial_x^{\nu}f}_{L^{\infty}({\bf R}^n)} \leq C^{1 + \abs{\mu} + \abs{\nu}} \left(\mu!\right)^{q} \left(\nu!\right)^{p}.
$$
We refer to~\cite{LMPSXu} and the references given there for a detailed discussion of the Gelfand-Shilov regularity theory. In the work~\cite{HPSV1},
prepared simultaneously with the present one, using direct methods, we carry out a more detailed study of the smoothing properties of the semigroup
$e^{-tq^w}$ in the small time limit, depending on phase space directions. Comparing Theorem \ref{theo_main2} with Corollary 1.2 in~\cite{HPSV1}, we
observe that the former result is sharper, since it provides an ${\cal O}(t^{-\frac{(2k_0+1)}{2}(\abs{\mu} + \abs{\nu} + 2n +s)})$ control for
the Gelfand-Shilov seminorms of $e^{-tq^w}u$ in the space $S^{1/2}_{1/2}(\real^n)$, whereas Corollary 1.2 in~\cite{HPSV1} gives a control in
$t^{-\frac{(2k_0+1)}{2}(\abs{\mu} + \abs{\nu} + s)}$ for the Gelfand-Shilov seminorms in the space $S^{\frac{2k_0 + 1}{2}}_{\frac{2k_0+1}{2}}(\real^n)$.

\medskip
\noindent
\color{black}
The plan of the paper is as follows. In Section 2, we study the semigroup generated by $q^w$ on the FBI transform side and establish the
estimate (\ref{eq1.81}). Representing the resolvent of $q^w$ as the Laplace transform of the semigroup and making use of some further direct arguments on the FBI
transform side, we complete the proof of Theorem \ref{theo_main} in Section 3. In Section 4, we establish Theorem \ref{theo_main2}, relying upon (\ref{eq1.81}),
essentially by comparing the semigroup generated by $q^w$ with that of the harmonic oscillator.

\bigskip
\noindent
{\bf Acknowledgements}. We are very grateful to Johannes Sj\"ostrand for helpful discussions. The first author would like to thank the Centre Henri Lebesgue at
the University of Rennes 1 for the kind hospitality in June 2015, where part of this project was conducted. The research of
the second and third authors is supported by the ANR NOSEVOL (Project: ANR 2011 BS0101901).

\section{The heat semigroup on the FBI transform side}\label{all}
\setcounter{equation}{0}
We shall view $q^w(x,D_x)$ as a closed densely defined operator on $L^2(\real^n)$, equipped with the domain
\begeq
\label{eq2.1}
{\cal D}(q^w) = \{u\in L^2(\real^n); q^w(x,D_x) u\in L^2(\real^n)\},
\endeq
and let us recall from~\cite[Section 4]{Ho95} that the operator $q^w(x,D_x)$ agrees with the graph closure of its restriction to
${\cal S}(\real^n)$,
$$
q^w(x,D_x): {\cal S}(\real^n) \rightarrow {\cal S}(\real^n).
$$
It follows, as observed in~\cite[Section 4]{Ho95}, that the operator $q^w(x,D_x)$ is maximal accretive and generates, in view of the Hille-Yosida theorem,
a strongly continuous contraction semigroup
\begeq
\label{eq2.2}
e^{-tq^w}: L^2(\real^n)\rightarrow L^2(\real^n),\quad t\geq 0.
\endeq
In our proof of Theorem \ref{theo_main}, following~\cite{HeSjSt},~\cite{HPS09},~\cite{Sj10}, we shall be concerned with the small time behavior of the semigroup
(\ref{eq2.2}) on the FBI transform side. Let
\begeq
\label{eq2.3}
Tu(x) = C\int e^{i\varphi(x,y)}u(y)\,dy,\quad x\in \comp^n,\quad C>0,
\endeq
be a metaplectic FBI-Bargmann transform, see~\cite{Sj95},~\cite{HiSj15}. Here $\varphi$ is a holomorphic quadratic form on $\comp^n_x\times \comp^n_y$, such that
$$
{\rm det}\,\varphi''_{xy}\neq 0,\quad {\rm Im}\,\varphi''_{yy}>0.
$$
Associated to $T$ there is a complex linear canonical transformation
\begeq
\label{eq2.31}
\kappa_T: \comp^{2n}\ni (y,-\varphi'_y(x,y))\mapsto (x,\varphi'_x(x,y))\in \comp^{2n}.
\endeq
We recall from~\cite[Proposition 1.1]{Sj95},~\cite[Theorem 1.3.3]{HiSj15} that if $C>0$ is suitably chosen in (\ref{eq2.3}), then $T$ is unitary,
\begeq
\label{eq2.4}
T: L^2(\real^n)\rightarrow H_{\Phi_0}(\comp^n),
\endeq
where
\begeq
\label{eq2.5}
H_{\Phi_0}(\comp^n) = {\rm Hol}(\comp^n)\cap L^2(\comp^n, e^{-2\Phi_0(x)}L(dx)),
\endeq
with
\begeq
\label{eq2.6}
\Phi_0(x) = \sup_{y\in {\bf R}^n} \left(-{\rm Im}\,\varphi(x,y)\right)
\endeq
and $L(dx)$ being the Lebesgue measure on $\comp^n$. Let us also recall from~\cite[Section 1]{Sj95},~\cite[Proposition 1.3.2]{HiSj15} that the real quadratic form
$\Phi_0$ in (\ref{eq2.6}) is strictly plurisubharmonic on $\comp^n$.

\medskip
\noindent
We have the exact Egorov property,~\cite[Proposition 1.4]{Sj95},~\cite[Theorem 1.4.2]{HiSj15},
\begeq
\label{eq2.7}
T q^w(y,D_y) u = \widetilde{q}^w(x,D_x)Tu,\quad u\in {\cal S}(\real^n),
\endeq
where $\widetilde{q}$ is a quadratic form on $\comp^{2n}$ given by
\begeq
\label{eq2.71}
\widetilde{q} = q\circ \kappa_T^{-1}.
\endeq
We refer to~\cite[Section 1]{Sj95},~\cite[Section 1.4]{HiSj15} for a discussion of the Weyl quantization in quadratic $H_{\Phi}$--spaces. Let us also recall
from~\cite[Section 1]{Sj95},~\cite[Proposition 1.3.2]{HiSj15} that the canonical transformation $\kappa_T$ maps $\real^{2n}$ bijectively onto
\begeq
\label{eq2.8}
\Lambda_{\Phi_0} = \left\{\left(x,\frac{2}{i}\frac{\partial \Phi_0}{\partial x}(x)\right); x\in \comp^n\right\}.
\endeq
Here the real linear subspace $\Lambda_{\Phi_0}\subset \comp^{2n}$ is I-Lagrangian and R-symplectic, and in particular, it is maximally totally real. The
holomorphic quadratic form $\widetilde{q}$ is therefore uniquely determined by its restriction to $\Lambda_{\Phi_0}$, and we may notice, in view of
(\ref{eq1.41}) and (\ref{eq2.71}), that
\begeq
\label{eq2.80}
{\rm Re}\,\widetilde{q}\left(x,\frac{2}{i}\frac{\partial \Phi_0}{\partial x}(x)\right)\geq 0,\quad x\in \comp^n.
\endeq

\bigskip
\noindent
Let us simplify the notation and write in what follows, $Q = \widetilde{q}^w(x,D_x)$. The operator $Q$ is a holomorphic quadratic differential operator and
we would like to study the unbounded operator $e^{tQ}$ on $H_{\Phi_0}(\comp^n)$, for $0\leq t\leq t_0$, with $t_0>0$ sufficiently small, see also~\cite{AlVi}.
To that end, let us consider the evolution problem
\begeq
\label{eq2.81}
\left(\partial_t - Q\right)u(t,x) = 0,\quad u|_{t=0} = u_0 \in H_{\Phi_0}(\comp^n),
\endeq
for $t\in [0,t_0]$, which we can solve by a geometric optics construction. Let $\phi(t,x,\eta)$ be the holomorphic quadratic form on
$\comp^n_x\times \comp^n_{\eta}$, depending smoothly on $t\in[0,t_0]$, with $t_0 > 0$ sufficiently small, and satisfying the Hamilton-Jacobi equation,
\begeq
\label{eq2.9}
i\partial_t \phi(t,x,\eta) - \widetilde{q}\left(x,\partial_x \phi(t,x,\eta)\right) = 0,\quad \phi(0,x,\eta) = x\cdot \eta.
\endeq
From the general Hamilton-Jacobi theory~\cite[Chapter 1]{DiSj}, we know that for $t\in [0,t_0]$, with $t_0 > 0$ small enough, the quadratic form
$\phi(t,x,\eta)$ can be obtained as a generating function for the complex linear canonical transformation
\begeq
\label{eq2.10}
\exp(itH_{\widetilde{q}}): \comp^{2n} \ni \left(\partial_{\eta}\phi(t,x,\eta),\eta\right) \mapsto \left(x,\partial_x \phi(t,x,\eta)\right)\in \comp^{2n}.
\endeq
Here, when $f$ is a holomorphic function on $\comp^{2n}= \comp^n_x\times \comp^n_{\xi}$, the Hamilton vector field $H_f$ of $f$ is a holomorphic vector field
given by the usual formula,
$$
H_f = \sum_{j=1}^n \left(\frac{\partial f}{\partial \xi_j} \frac{\partial}{\partial x_j} - \frac{\partial f}{\partial x_j} \frac{\partial}{\partial x_j}\right).
$$
It follows that for $0\leq t \leq t_0 \ll 1$, the solution operator $e^{tQ}$ to (\ref{eq2.81}) is given by the following quadratic Fourier integral operator in
the complex domain,
\begeq
\label{eq2.11}
e^{tQ}u(x) = \frac{1}{(2\pi)^n} \int\!\!\!\int_{\Gamma(x,t)} e^{i\left(\phi(t,x,\eta) - y\cdot \eta\right)} a(t)\, u(y)\,dy\,d\eta,\quad u\in H_{\Phi_0}(\comp^n).
\endeq
Here the amplitude $a(t)$, depending smoothly on $t$, with $a(0) = 1$, is obtained by solving a suitable transport equation, which we need not specify here.
When explaining the choice of the contour of integration $\Gamma(x,t)$ in (\ref{eq2.11}), we shall follow the discussion in Appendix B of~\cite{CaGrHiSj}, which
in turn can be viewed as a linear version of the general theory described in~\cite[Chapters 3,4]{Sj82}. Let $\Phi_t$ be a real strictly plurisubharmonic quadratic form on $\comp^n$, depending smoothly on $t\in [0,t_0]$,
for $t_0 > 0$ small enough, such that if we set
\begeq
\label{eq2.12}
\Lambda_{\Phi_t} = \left\{\left(x,\frac{2}{i}\frac{\partial \Phi_t}{\partial x}(x)\right); x\in \comp^n\right\}
\endeq
then
\begeq
\label{eq2.13}
\Lambda_{\Phi_t} = \exp\left(t \widehat{H_{i\widetilde{q}}}\right)(\Lambda_{\Phi_0}).
\endeq
Here, when $\nu$ is a vector field of type $(1,0)$ on $\comp^{2n}$, we let $\widehat{\nu} = \nu + \overline{\nu}$ be the corresponding real vector field. An
application of~\cite[Proposition B.1]{CaGrHiSj} and~\cite[Proposition B.2]{CaGrHiSj} allows us to conclude that the plurisubharmonic quadratic form
$$
\comp^n \times \comp^n \ni (y,\eta) \mapsto -{\rm Im}\,\left(\phi(t,x,\eta)-y\cdot \eta\right) + \Phi_0(y)
$$
has a unique critical point $(y_c(x,t),\eta_c(x,t))$ for each $x\in \comp^{n}$ and $t\in [0,t_0]$, which is non-degenerate of signature $(2n,2n)$. Furthermore,
we have
$$
\Phi_t(x) = {\rm vc}_{y,\eta}\left(-{\rm Im}\,\left(\phi(t,x,\eta)-y\cdot \eta\right) + \Phi_0(y)\right),
$$
where the general notation ${\rm vc}_{y,\eta}\left(\ldots\,\right)$ stands for the critical value with respect to $y$ and $\eta$ of $\left(\ldots\,\right)$.
The contour $\Gamma(x,t) \subset \comp^{2n}_{y,\eta}$ in (\ref{eq2.11}) is a so called good contour~\cite[Chapter 3]{Sj82}, which is an affine subspace of
$\comp^{2n}$ of real dimension $2n$, passing through the critical point $(y_c(x,t),\eta_c(x,t))$, and along which we have
$$
-{\rm Im}\,\left(\phi(t,x,\eta)-y\cdot \eta\right) + \Phi_0(y) - \Phi_t(x) \asymp -\left(\abs{y -y_c(x,t)}^2 + \abs{\eta - \eta_c(x,t)}^2\right).
$$
Applying ~\cite[Proposition B.3]{CaGrHiSj}, we conclude that we have a bounded operator
\begeq
\label{eq2.14}
e^{tQ}: H_{\Phi_0}(\comp^n) \rightarrow H_{\Phi_t}(\comp^n),\quad 0 \leq t \leq t_0 \ll 1,
\endeq
where similarly to (\ref{eq2.5}), we set
$$
H_{\Phi_t}(\comp^n) = {\rm Hol}(\comp^n)\cap L^2(\comp^n, e^{-2\Phi_t(x)}L(dx)).
$$

\bigskip
\noindent
Associated to the function $\Phi(t,x) = \Phi_t(x)$ is the manifold
$$
\tau = \frac{\partial \Phi}{\partial t},\quad \xi = \frac{2}{i}\frac{\partial \Phi}{\partial x},
$$
in $\real^2_{t,\tau} \times \comp^{2n}_{x,\xi}$, which is Lagrangian with respect to the real symplectic form
\begeq
\label{eq2.14.1}
d\tau \wedge dt - {\rm Im}\, \sigma,
\endeq
where
$$
\sigma = \sum_{j=1}^n d\xi_j \wedge dx_j,
$$
is the complex symplectic $(2,0)$--form on $\comp^{2n} = \comp^n_x\times \comp^n_{\xi}$. Let us also recall the general relation~\cite{HPS09},
\begeq
\label{eq2.14.2}
\widehat{H_{i\widetilde{q}}} = H_{-{\rm Re}\,\widetilde{q}}^{-{\rm Im}\,\sigma},
\endeq
where $H^{-{\rm Im}\,\sigma}_g$ is the Hamilton vector field of a function $g \in C^1(\comp^{2n},\real)$, computed with respect to the real
symplectic form $-{\rm Im}\, \sigma$. The Hamilton-Jacobi theory applied with respect to the real symplectic form in (\ref{eq2.14.1}) tells us
therefore that the function $\Phi(t,x)$ satisfies the real Hamilton-Jacobi equation,
\begeq
\label{eq2.15}
\frac{\partial \Phi}{\partial t}(t,x) - {\rm Re}\,\widetilde{q}\left(x,\frac{2}{i}\frac{\partial \Phi}{\partial x}(t,x)\right) = 0,\quad
\Phi(0,\cdot) = \Phi_0,
\endeq
for $x\in \comp^n$, $0\leq t \leq t_0\ll 1$. See also~\cite[Section 3]{Sj83} and~\cite[Section 3]{HPS09}.

\medskip
\noindent
Now (\ref{eq2.14.2}) implies that the function ${\rm Re}\, \widetilde{q}$ is constant along the flow of the Hamilton vector field $\widehat{H_{i\widetilde{q}}}$, and
it follows from (\ref{eq2.80}) and (\ref{eq2.13}) that ${\rm Re}\, \widetilde{q}|_{\Lambda_{\Phi_t}} \geq 0$. Using (\ref{eq2.15}) we conclude that
$$
\frac{\partial \Phi}{\partial t}(t,x) \geq 0,
$$
so that the function $t\mapsto \Phi_t(x)$ is increasing.

\bigskip
\noindent
{\it Remark}. Let us consider estimates for the operator norm of
$$
e^{tQ} \in {\cal L}(H_{\Phi_0}(\comp^n), H_{\Phi_t}(\comp^n)),
$$
for $0 \leq t \leq t_0$. When doing so, let us set 
$$
L^2_{\Phi_0}(\comp^n) = L^2 (\comp^n; e^{-2\Phi_0} L(dx)),
$$
and let $u\in H_{\Phi_0}(\comp^n)$ be such that
\begeq
\label{eq2.15.01}
\langle{x\rangle}^N u \in L^2_{\Phi_0}(\comp^n), 
\endeq
for all $N\in \nat$. We recall from~\cite{Sj95} that functions in $H_{\Phi_0}(\comp^n)$ satisfying (\ref{eq2.15.01}) are precisely those for which
$T^{-1}u \in {\cal S}(\real^n)$, and in particular, such functions are dense in $H_{\Phi_0}(\comp^n)$. Let us differentiate the scalar product
$$
(e^{tQ}u, e^{tQ}u)_{H_{\Phi_t}} = (u(t),u(t))_{H_{\Phi_t}}
$$
with respect to $t$.
We get
\begin{multline*}
\frac{d}{dt} (u(t),u(t))_{H_{\Phi_t}} = \\ (Qu(t),u(t))_{H_{\Phi_t}} + (u(t), Qu(t))_{H_{\Phi_t}}
- 2 \int \abs{u(t)}^2 e^{-2\Phi_t(x)}\frac{\partial \Phi_t}{\partial t}(x)\, L(dx).
\end{multline*}
Here the first two terms in the right hand side can be simplified by means of the quantization-multiplication
formula~\cite[Theorem 1.2]{Sj90},~\cite[Proposition 1.4.4]{HiSj15}, which becomes exact in the present quadratic case,
\begeq
\label{eq2.15.1}
(Qu(t),u(t))_{H_{\Phi_t}} = \int \widetilde{q}\left(x,\frac{2}{i}\frac{\partial \Phi_t}{\partial x}(x)\right) \abs{u(t)}^2 e^{-2\Phi_t(x)}\, L(dx) +
b(t)\norm{u(t)}^2_{H_{\Phi_t}},
\endeq
where $b \in C^{\infty}([0,t_0])$. We refer to~\cite[Section 2]{AlVi} for an explicit computation of $\Phi_t$ and $b(t)$, in a particular
FBI representation. Combining (\ref{eq2.15.1}) with its analog for $(u(t), Qu(t))_{H_{\Phi_t}}$ and using (\ref{eq2.15}) we obtain that
$$
\frac{d}{dt} \norm{u(t)}^2_{H_{\Phi_t}} = \left(2{\rm Re}\, b(t)\right) \norm{u(t)}^2_{H_{\Phi_t}}.
$$
Setting $B(t) = \int_0^t {\rm Re}\, b(s)\,ds \in C^{\infty}([0,t_0])$, we conclude that the operator
\begeq
\label{eq2.15.2}
e^{-B(t)} e^{tQ}: H_{\Phi_0}(\comp^n) \rightarrow H_{\Phi_t}(\comp^n),\quad 0 \leq t \leq t_0,
\endeq
is an isometry. As we shall discuss in the next section, it follows from the general theory~\cite[Chapters 3,4]{Sj82} that it is necessarily a bijection, and
therefore the map (\ref{eq2.15.2}) is unitary.

\bigskip
\noindent
Let us recall that the integer $0 \leq k_0 \leq 2n-1$ has been introduced in (\ref{eq1.7}). The following is the main result of this section.
\begin{theo}
\label{theo_sec2}
There exist $t_0 > 0$ and $C>0$ such that for all $t\in [0,t_0]$ we have
\begeq
\label{eq2.16}
\Phi_t(x) \geq \Phi_0(x) + \frac{t^{2k_0+1}}{C}\abs{x}^2,\quad x\in \comp^n.
\endeq
\end{theo}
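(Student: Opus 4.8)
The plan is to transport the Hamilton--Jacobi evolution of $\Phi_t$ to the real phase space $\real^{2n}$ via the inverse of the canonical transformation $\kappa_T$, following the strategy of~\cite{Sj10}, and then to read off~(\ref{eq2.16}) from the subelliptic condition~(\ref{eq1.71}).

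First I would record an exact integral formula for $\Phi_t-\Phi_0$. For $x\in\comp^n$ and $s\in[0,t_0]$, the point $(x,\frac{2}{i}\partial_x\Phi_s(x))$ lies on $\Lambda_{\Phi_s}$, so by~(\ref{eq2.13}) its image under $\exp(-s\widehat{H_{i\widetilde{q}}})$ lies on $\Lambda_{\Phi_0}=\kappa_T(\real^{2n})$, and one may set
$$
X(s,x)=\kappa_T^{-1}\left(\exp\big(-s\widehat{H_{i\widetilde{q}}}\big)\Big(x,\frac{2}{i}\partial_x\Phi_s(x)\Big)\right)\in\real^{2n}.
$$
Since $s\mapsto\Phi_s$ is a continuous family of strictly plurisubharmonic quadratic forms and the linear maps $\kappa_T^{-1}$, $\exp(-s\widehat{H_{i\widetilde{q}}})$, $s\in[0,t_0]$, are uniformly invertible, we have $\abs{X(s,x)}\asymp\abs{x}$ with constants independent of $s$ and $x$. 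Using the real Hamilton--Jacobi equation~(\ref{eq2.15}), the invariance of $\Re\widetilde{q}$ along the flow of $\widehat{H_{i\widetilde{q}}}$ noted above, and $\widetilde{q}=q\circ\kappa_T^{-1}$, one gets
$$
\partial_s\Phi_s(x)=\Re\widetilde{q}\Big(x,\frac{2}{i}\partial_x\Phi_s(x)\Big)=\Re\widetilde{q}\big(\kappa_T X(s,x)\big)=\Re q\big(X(s,x)\big)\geq 0,
$$
and therefore, integrating in $s$,
$$
\Phi_t(x)-\Phi_0(x)=\int_0^t\Re q\big(X(s,x)\big)\,ds,\qquad x\in\comp^n,\ t\in[0,t_0].
$$
This already reproves that $t\mapsto\Phi_t(x)$ is increasing; the point is now the quantitative lower bound $\geq t^{2k_0+1}\abs{x}^2/C$.

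Next I would analyse the curve $s\mapsto X(s,x)$ on $\real^{2n}$. Transporting the relation~(\ref{eq2.14.2}) by $\kappa_T^{-1}$, one finds that along $\real^{2n}$ the relevant real vector field has tangential component the Hamilton vector field $H_{\Im q}$ of $\Im q$ and a transversal component in the imaginary directions. Re-expressing the evolution~(\ref{eq2.13}) of $\Lambda_{\Phi_t}$ as a Hamilton--Jacobi problem for the corresponding deformation of $\real^{2n}$ --- this is exactly where the method of~\cite{Sj10} replaces~(\ref{eq2.15}) by a more tractable equation --- one obtains, for $t_0>0$ small, all $t\in[0,t_0]$ and all $x\in\comp^n$, a lower bound of the form
$$
\Phi_t(x)-\Phi_0(x)\ \geq\ \frac{1}{C}\int_0^t\Re q\big(\exp(sH_{\Im q})X_0\big)\,ds,
$$
for some $X_0=X_0(t,x)\in\real^{2n}$ with $\abs{X_0}\asymp\abs{x}$; in other words, the curve $X(s,x)$ stays close enough to the bicharacteristic $s\mapsto\exp(sH_{\Im q})X_0$ of $\Im q$ that its deviation contributes to the integral only at an order strictly higher than $t^{2k_0+1}$. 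I expect this comparison --- establishing, uniformly and to an order beating $t^{2k_0+1}$, that the transported flow agrees with a genuine $\Im q$-bicharacteristic --- to be the main obstacle, and the reason for passing to the real side rather than working directly with~(\ref{eq2.15}).

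Granting this reduction, it remains to prove that there are $c_0>0$ and $t_0>0$ such that
$$
\int_0^t\Re q\big(\exp(sH_{\Im q})Y\big)\,ds\ \geq\ c_0\,t^{2k_0+1}\abs{Y}^2,\qquad Y\in\real^{2n},\ t\in[0,t_0];
$$
applied with $Y=X_0(t,x)$ this yields~(\ref{eq2.16}). By homogeneity one may take $\abs{Y}=1$, and for such $Y$ the subelliptic condition~(\ref{eq1.71}) provides $j=j(Y)\in\{0,\dots,k_0\}$ and $a(Y)>0$ with $\Re q(\exp(sH_{\Im q})Y)=a(Y)s^{2j}+{\cal O}(s^{2j+1})$ as $s\to 0$, so that $\int_0^t\Re q(\exp(sH_{\Im q})Y)\,ds\geq\frac{a(Y)}{2(2j+1)}t^{2j+1}\geq\frac{a(Y)}{2(2k_0+1)}t^{2k_0+1}$ for $t$ small depending on $Y$. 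The uniformity over the unit sphere is obtained by the standard subelliptic rescaling adapted to the nested linear subspaces
$$
A_i=\Big(\bigcap_{0\leq k\leq 2i-1}\textrm{Ker}\,\big(H_{\Im q}^k\Re q\big)\Big)\cap\real^{2n},
$$
which are genuine subspaces because $\Re q\geq 0$ makes each iterate $H_{\Im q}^{2\ell}\Re q$ a nonnegative quadratic form on $A_\ell$ with kernel $A_{\ell+1}$, and which satisfy $A_{k_0+1}=\{0\}$ by~(\ref{eq1.7}); equivalently, the bound is contained in the positivity statement~\cite[Proposition 2.0.1]{HPS09} together with this scaling structure and homogeneity. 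This gives the uniform estimate, and hence~(\ref{eq2.16}), completing the proof of Theorem~\ref{theo_sec2}.
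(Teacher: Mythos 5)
Your overall strategy coincides with the paper's (and with Sj\"ostrand's in \cite{Sj10}): pass to the real side via $\kappa_T^{-1}$, reduce the problem to a lower bound for $J(t,X)=\int_0^t{\rm Re}\,q(\exp(sH_{{\rm Im}\,q})X)\,ds$, and extract the $t^{2k_0+1}$ rate from the subelliptic condition (1.9). But there is a genuine gap exactly where you flag ``the main obstacle'' and then write ``granting this reduction'': the comparison between the implicitly defined curve $s\mapsto X(s,x)$ and a genuine $H_{{\rm Im}\,q}$--bicharacteristic is the heart of the proof, and you supply no mechanism for it. The paper fills it in as follows: it parametrizes $\Lambda_t=\kappa_T^{-1}(\Lambda_{\Phi_t})=\exp(itH_q)(\real^{2n})$ as $\{X+iH_{G_t}(X);\,X\in\real^{2n}\}$ for a unique real quadratic form $G_t$ with $G_0=0$; the real Hamilton--Jacobi equation (2.25) then transforms into $\partial_tG_t-H_{{\rm Im}\,q}G_t+{\rm Re}\,q(H_{G_t})={\rm Re}\,q$. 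The nonlinear term is controlled by $0\le{\rm Re}\,q(H_{G_t}(X))\le CG_t(X)$, which relies on $G_t\ge0$ --- itself a consequence of the inversion formula (2.32) and $\Phi_t\ge\Phi_0$ --- and Gronwall along the integral curves of $H_{-{\rm Im}\,q}$ yields $G_t(\exp(tH_{-{\rm Im}\,q})X)\ge\frac1C\int_0^t{\rm Re}\,q(\exp(sH_{-{\rm Im}\,q})X)\,ds$. Note that the flow which appears is that of $-{\rm Im}\,q$, and one must then observe that $\overline{q}$ satisfies the same hypotheses with the same $k_0$. Finally the bound $G_t\ge t^{2k_0+1}\abs{\cdot}^2/C$ still has to be converted into the stated inequality for $\Phi_t$ through the critical-value formula relating $\Phi_t$ to $G_t$; your direct integral identity for $\Phi_t-\Phi_0$ would let you bypass that last step, but only if the comparison step were established, which it is not.

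A second, smaller gap is the uniformity over the unit sphere in the lower bound for $J$. Citing \cite[Proposition 2.0.1]{HPS09} gives only positive definiteness of $J(t,\cdot)$ for each fixed $t>0$, with no rate in $t$, and the ``standard subelliptic rescaling adapted to nested subspaces'' is asserted rather than carried out (note also that for odd $k$ the set ${\rm Ker}\,(H_{{\rm Im}\,q}^k{\rm Re}\,q)$ is not obviously a linear subspace, so the structure of your $A_i$ needs justification). The paper instead proves the uniform bound by a compactness--contradiction argument: if $J(t_\nu,Y_\nu)/t_\nu^{2j+1}\to0$ with $Y_\nu\to X$ on the sphere, then after the rescaling $s=t/t_\nu$ the Taylor polynomials of degree $2j+1$ of $J$ tend to zero uniformly on $[0,1]$, hence so do all their coefficients by equivalence of norms in finite dimension, contradicting $\partial_t^{2j+1}J(0,X)=(2j)!\,a>0$ supplied by (1.9). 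Your pointwise use of the subelliptic condition is fine; it is precisely this passage to uniform constants that requires an argument.
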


\medskip
\noindent
When proving Theorem \ref{theo_sec2}, we shall follow the arguments of~\cite{Sj10} closely, and indeed, the following discussion can be viewed as a
straightforward adaptation of the analysis of~\cite[Sections 2,3]{Sj10} to the quadratic case. Let us introduce a $C^{\infty}$--family of linear
I-Lagrangian R-symplectic manifolds $\Lambda_t$, given by
\begeq
\label{eq2.17}
\Lambda_t = \kappa_T^{-1}\left(\Lambda_{\Phi_t}\right)\subset \comp^{2n}, \quad t \in [0,t_0],\quad 0 < t_0 \ll 1. 
\endeq
Using (\ref{eq2.71}) and (\ref{eq2.13}), we can write
$$
\Lambda_t = \kappa_T^{-1} \circ \exp(itH_{\widetilde{q}})(\Lambda_{\Phi_0}) = \kappa_T^{-1} \circ \exp(itH_{\widetilde{q}})\circ \kappa_T \left(\real^{2n}\right) =
\exp(itH_q)\left(\real^{2n}\right).
$$
Here we have identified the flow $\exp(t\widehat{H_{i\widetilde{q}}})$ of the real vector field $\widehat{H_{i\widetilde{q}}}$ with the
holomorphic flow of the holomorphic vector field $H_{i\widetilde{q}}$, restricted to small positive $t\in \real$.
The I-Lagrangian manifold $\Lambda_t\subset \comp^{2n}_{x,\xi}$ is an ${\cal O}(t)$--perturbation of $\real^{2n}$ in the sense of linear subspaces,
and the real 1-form ${\rm Im}\, \left(\xi \cdot dx\right)$ is closed, and hence exact, on $\Lambda_t$. It follows that there exists a unique real-valued
quadratic form $G_t$ on $\real^{2n}$, depending smoothly on $t\in {\rm neigh}(0,[0,\infty))$, such that $G_0 = 0$ and
\begeq
\label{eq2.18}
\Lambda_t = \kappa_T^{-1}\left(\Lambda_{\Phi_t}\right) = \{X + iH_{G_t}(X);\,\, X\in \real^{2n}\}.
\endeq
Here $H_{G_t}$ is the Hamilton vector field of $G_t$.

\bigskip
\noindent
{\it Example}. Let $q: \real^n_x\times \real^n_{\xi} \rightarrow \real$ be real-valued and positive semi-definite. Then, using the fact that $H_q = 2F$ is real,
we write for small $t\in \real$,
$$
\Lambda_t = \exp(itH_q)\left(\real^{2n}\right) = \{X + i {\rm tan}(2tF)X;\, X\in \real^{2n}\},
$$
where ${\rm tan}(2tF) = {\rm sin}(2tF) \left({\rm cos}(2tF)\right)^{-1}$. Observing that ${\rm cos}\, F$ is symmetric and ${\rm sin}\, F$ is skew-symmetric
with respect to the symplectic form $\sigma$, we see that the bilinear form
$$
\real^{2n} \times \real^{2n} \ni (X,Y)\mapsto \sigma(X,{\rm tan}\,(2tF)Y)
$$
is symmetric. It follows that the quadratic form $G_t$ in (\ref{eq2.18}) is given by
$$
G_t(X) = \frac{1}{2}\sigma(X, {\rm tan}\,(2tF)X).
$$

\bigskip
\noindent
In~\cite[Proposition 2.1]{Sj10}, it is explained how to recover the quadratic form $\Phi_t$ in (\ref{eq2.12}) from $G_t$, and we recall from this result that
\begeq
\label{eq2.19}
\Phi_{t}(x) = {\rm vc}_{(y,\eta)\in {\bf C}^n \times {\bf R}^n} \left(-{\rm Im}\, \varphi(x,y) - \eta\cdot {\rm Im}\, y + G_t({\rm Re}\, y,\eta)\right).
\endeq
Here $\varphi(x,y)$ is the phase of the FBI-Bargmann transform in (\ref{eq2.3}). For the convenience of the reader, we shall now discuss briefly 
the derivation of the formula (\ref{eq2.19}), following~\cite{Sj10}. When doing so, let us observe that the point $(y,\eta) \in \comp^n \times \real^n$ is 
a critical point of the function 
$$
\comp^n \times \real^n \ni (y,\eta) \mapsto -{\rm Im}\, \varphi(x,y) - \eta\cdot {\rm Im}\, y + G_t({\rm Re}\, y,\eta)
$$
precisely when we have 
\begeq
\label{eq2.19.001}
{\rm Im}\, y = \nabla_{\eta} G_t({\rm Re}\, y,\eta), \quad \frac{\partial}{\partial {\rm Re}\, y} {\rm Im}\,\varphi(x,y) = \nabla_y G_t({\rm Re}\,y,\eta),
\endeq
and 
\begeq
\label{eq2.19.002}
\frac{\partial}{\partial {\rm Im}\, y} {\rm Im}\,\varphi(x,y) + \eta = 0.
\endeq
In view of the Cauchy-Riemann equations, we can write 
$$
\frac{\partial}{\partial {\rm Re}\, y} {\rm Im}\,\varphi(x,y) = {\rm Im}\, \varphi'_y(x,y), \quad 
\frac{\partial}{\partial {\rm Im}\, y} {\rm Im}\,\varphi(x,y) = {\rm Re}\, \varphi'_y(x,y),
$$
and therefore, (\ref{eq2.19.001}), (\ref{eq2.19.002}) can equivalently be stated as follows, 
$$
(y,-\varphi'_y(x,y)) = ({\rm Re}\,y,\eta) + i H_{G_t}({\rm Re}\, y, \eta).
$$
We conclude that the critical value in (\ref{eq2.19}) is attained at a unique critical point $(y,\eta) = (y(x,t),\eta(x,t))\in \comp^n \times \real^n$,
which is determined by the condition that the linear canonical transformation $\kappa_T$ in (\ref{eq2.31}) maps the point
\begeq
\label{eq2.19.01}
({\rm Re}\, y,\eta) + i H_{G_t}({\rm Re}\, y,\eta) \in \Lambda_t = \kappa_T^{-1}\left(\Lambda_{\Phi_t}\right)
\endeq
to the point $(x,\varphi'_x(x,y))$, situated above $x \in \comp^n$. As verified in~\cite{Sj10}, the critical point is non-degenerate, and in order to complete
the proof of (\ref{eq2.19}), it suffices to observe that if $\Phi_t(x)$ stands for the critical value in (\ref{eq2.19}), we have, using that $\Phi_t$ is a 
critical value,   
$$
\frac{2}{i}\frac{\partial \Phi_t}{\partial x}(x) = \frac{2}{i} \frac{\partial} {\partial x} \left(-{\rm Im}\, \varphi(x,y)\right) = \varphi'_x(x,y),
$$
where $(y,\eta)$ is the corresponding critical point. 

\medskip
\noindent
Continuing to follow~\cite{Sj10}, from~\cite[Proposition 2.4]{Sj10}, let us also recall the following inversion formula for $(y,\eta)\in \real^{2n}$,
\begeq
\label{eq2.19.1}
G_t(y,\eta) = {\rm vc}_{(x,\theta)\in {\bf C}^n \times {\bf R}^n} \left({\rm Im}\, \varphi(x,y+i\theta) + \eta\cdot \theta  + \Phi_t(x)\right).
\endeq
Using that $\Phi_t \geq \Phi_0$ we conclude as in~\cite[Section 2]{Sj10}, that $G_t \geq 0$ for all $t\in [0,t_0]$, for $t_0 > 0$ sufficiently small.

\medskip
\noindent
We shall next show, following~\cite[Section 3]{Sj10}, that the real Hamilton-Jacobi equation (\ref{eq2.15}) for $\Phi_t$ implies a similar equation for $G_t$.
To this end, let $(x(t,y,\eta), \theta(t,y,\eta))$ be the critical point in (\ref{eq2.19.1}), and let us write, using the fact that $G_t$ is the
critical value in (\ref{eq2.19.1}) together with (\ref{eq2.15}),
$$
\frac{\partial G_t}{\partial t}(y,\eta) = \frac{\partial \Phi_t}{\partial t}(x(t,y,\eta)) =
{\rm Re}\,\widetilde{q}\left(x,\frac{2}{i}\frac{\partial \Phi_t}{\partial x}(x)\right)\bigg |_{x = x(t,y,\eta)}.
$$
Now as discussed above, the critical points in (\ref{eq2.19}) and (\ref{eq2.19.1}) are related to $\kappa_T$, so that
$$
\left(x(t,y,\eta), \frac{2}{i} \frac{\partial \Phi_t}{\partial x}(x(t,y,\eta))\right) = \kappa_T \left((y,\eta) + i H_{G_t}(y,\eta)\right).
$$
We obtain therefore the following equation for the smooth family of quadratic forms $G_t$,
\begeq
\frac{\partial G_t}{\partial t}(y,\eta) = {\rm Re}\, \left(q((y,\eta) + iH_{G_t}(y,\eta))\right),
\endeq
which we state as an initial value problem, for $X\in \real^{2n}$ and $t\in [0,t_0]$, for $t_0 > 0$ small enough,
\begeq
\label{eq2.20}
\frac{\partial G_t}{\partial t}(X) - {\rm Re}\,(q\left(X + iH_{G_t}(X)\right)) = 0,\quad G_0 = 0.
\endeq

\medskip
\noindent
Using the fact that $q$ is quadratic, we write
\begeq
\label{eq2.20.1}
{\rm Re}\,(q\left(X + iH_{G_t}(X)\right)) = {\rm Re}\, q(X) + H_{{\rm Im}\, q} G_t(X) - {\rm Re}\, q(H_{G_t}(X)),
\endeq
and the equation (\ref{eq2.20}) becomes
\begeq
\label{eq2.20.11}
\frac{\partial G_t}{\partial t}(X) - H_{{\rm Im}\, q} G_t(X) + {\rm Re}\, q(H_{G_t}(X)) = {\rm Re}\, q(X),\quad G_0 = 0.
\endeq
Here, as we have already observed, the quadratic form $G_t\geq 0$ is nonnegative, and therefore, for some constant $C>0$, we have for all $t\in [0,t_0]$,
\begeq
\label{eq2.20.12}
0\leq {\rm Re}\, q(H_{G_t}(X)) \leq {\cal O}(1)\abs{\nabla G_t(X)}^2 \leq C G_t(X).
\endeq
We get, using (\ref{eq2.20.11}) and (\ref{eq2.20.12}),
\begeq
\label{eq2.20.2}
\frac{\partial G_t}{\partial t}(X) - H_{{\rm Im}\, q} G_t(X) + C G_t(X) \geq {\rm Re}\, q(X),\quad G_0 = 0.
\endeq
Considering (\ref{eq2.20.2}) as a differential inequality along the integral curves of $H_{-{\rm Im}\,q}$ and using Gronwall's lemma we see that for all
$t\in [0,t_0]$ and $X\in \real^{2n}$, we have for some $C > 0$,
\begeq
\label{eq2.20.3}
G_t(\exp(tH_{-{\rm Im}\,q})(X)) \geq \frac{1}{C}\int_0^t {\rm Re}\,q \left(\exp(sH_{-{\rm Im}\,q})(X)\right)\,ds.
\endeq

\medskip
\noindent
Let us put
\begeq
\label{eq2.21}
J(t,X) = \int_0^t {\rm Re}\, q\left(\exp(sH_{{\rm Im}\, q})(X)\right)\,ds,\quad t\in [0,t_0],\quad X\in \real^{2n},
\endeq
so that $0 \leq J(t,X)$ is a quadratic form on $\real^{2n}$ with $C^{\infty}$ dependence on $t\in [0,t_0]$.
From~\cite[Proposition 2.0.1]{HPS09}, let us recall that for each $t>0$, the quadratic form $X \mapsto J(t,X)$ is positive definite on $\real^{2n}$.
The following result, which is an analog of ~\cite[Proposition 3.2]{Sj10}, is a sharpening of this basic observation.

\begin{prop}
There exist $t_0>0$ and $C>0$ such that for all $t\in [0,t_0]$ we have
\begeq
\label{eq2.22}
J(t,X) \geq \frac{t^{2k_0+1}}{C}\abs{X}^2,\quad X\in \real^{2n}.
\endeq
\end{prop}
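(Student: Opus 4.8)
The plan is to reduce the lower bound on $J(t,X)$ to the subelliptic condition (\ref{eq1.71}), which controls the vanishing order of $t \mapsto {\rm Re}\,q(\exp(tH_{{\rm Im}\,q})(X))$ at $t=0$ uniformly over the unit sphere. First I would note that it suffices to prove (\ref{eq2.22}) for $X$ on the unit sphere $S^{2n-1}$, by homogeneity, since $J(t,\cdot)$ is a quadratic form. The integrand $(s,X) \mapsto {\rm Re}\,q(\exp(sH_{{\rm Im}\,q})(X))$ is a nonnegative quadratic form in $X$ depending smoothly (in fact analytically) on $s$, and by (\ref{eq1.71}), for each fixed $X \neq 0$ there is $j = j(X) \in \{0,\ldots,k_0\}$ with ${\rm Re}\,q(\exp(sH_{{\rm Im}\,q})(X)) = a(X)s^{2j} + {\cal O}(s^{2j+1})$, $a(X) > 0$. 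Integrating, $J(t,X) \geq \frac{a(X)}{2j+1}t^{2j+1} - {\cal O}(t^{2j+2})$ near $t=0$, which for $j \leq k_0$ is bounded below by $\frac{1}{C}t^{2k_0+1}$ on any interval $(0,t_0]$ provided we have uniformity in $X$.

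The main work is therefore to upgrade this pointwise statement into a bound uniform over $S^{2n-1}$. Here is the compactness argument I would carry out. Define, for $X \in S^{2n-1}$ and $t \in [0,t_0]$, the continuous function $\Psi(t,X) = t^{-(2k_0+1)}J(t,X)$ for $t > 0$; I claim $\Psi$ extends continuously to $t=0$ with $\Psi(0,X) > 0$ everywhere, which combined with compactness of $\{0\} \times S^{2n-1}$ gives a uniform positive lower bound on a strip $[0,t_1] \times S^{2n-1}$. To see the extension and positivity at $t=0$: since ${\rm Re}\,q \circ \exp(sH_{{\rm Im}\,q})$ is, as a quadratic form in $X$, polynomial of bounded degree in $s$, one can write its restriction to $S^{2n-1}$ via a finite Taylor expansion in $s$ with coefficients the quadratic forms $\frac{1}{k!}H_{{\rm Im}\,q}^k{\rm Re}\,q$ evaluated at $X$. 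The first nonvanishing one, at order $2j(X)$, has strictly positive value by (\ref{eq1.71}); and $j(X) \leq k_0$ for all $X$, which is precisely the content of (\ref{eq1.7})/(\ref{eq1.51}). Hence $J(t,X) = \sum_{k \geq 0} c_k(X)\frac{t^{k+1}}{k+1}$ with $c_k(X) = (H_{{\rm Im}\,q}^k{\rm Re}\,q)(X)/k!$, and the sum starts at $k = 2j(X) \leq 2k_0$; so $t^{-(2k_0+1)}J(t,X) = \sum_{k} c_k(X)\frac{t^{k-2k_0}}{k+1}$, a finite sum of powers $t^{k-2k_0}$ with $k-2k_0 \geq -2k_0 \cdots$ — wait, I need to be slightly careful: the natural normalization is $t^{-(2j(X)+1)}$, not $t^{-(2k_0+1)}$, so instead I would argue as follows.

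Better: I would prove directly that $\inf_{X \in S^{2n-1}} t^{-(2k_0+1)}J(t,X) > 0$ for all small $t > 0$ by a two-scale argument. Fix $X \in S^{2n-1}$. By Taylor's formula with the nonvanishing leading term at order $2j(X) \leq 2k_0$, there exist $t_X > 0$ and $c_X > 0$ such that ${\rm Re}\,q(\exp(sH_{{\rm Im}\,q})(X)) \geq c_X s^{2j(X)} \geq c_X s^{2k_0}$ for $0 \leq s \leq t_X$ and $0 \leq s \leq 1$; actually the inequality $s^{2j(X)} \geq s^{2k_0}$ needs $s \leq 1$, fine. Hence $J(t,X) \geq \frac{c_X}{2k_0+1}t^{2k_0+1}$ for $0 \leq t \leq \min(t_X,1)$. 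Now by continuity of $(s,Y) \mapsto {\rm Re}\,q(\exp(sH_{{\rm Im}\,q})(Y))$, the same estimate with $c_X$ replaced by $c_X/2$ and $t_X$ by $t_X/2$ holds for all $Y$ in a neighborhood $U_X$ of $X$ in $S^{2n-1}$. Cover $S^{2n-1}$ by finitely many $U_{X_1},\ldots,U_{X_m}$, set $t_0 = \min_i \min(t_{X_i}/2,1)$ and $1/C = \frac{1}{2(2k_0+1)}\min_i c_{X_i}$; then $J(t,Y) \geq \frac{1}{C}t^{2k_0+1}$ for all $Y \in S^{2n-1}$ and $0 \leq t \leq t_0$. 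Homogeneity gives (\ref{eq2.22}) for all $X \in \real^{2n}$. The main obstacle is simply bookkeeping the uniformity — ensuring the exponent is $2k_0+1$ and not $2j(X)+1$ (solved by the $s \leq 1$ trick, downgrading all vanishing orders to the worst case $k_0$) — together with invoking (\ref{eq1.71}) with the uniform bound $j(X) \leq k_0$, which is guaranteed by the defining minimality of $k_0$ in (\ref{eq1.7}).
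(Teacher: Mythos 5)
The overall architecture of your argument --- restrict to the unit sphere, obtain a local lower bound near each point from (\ref{eq1.71}), then conclude by compactness and homogeneity --- is the same as the paper's. The gap is in the step that produces the local bound on a neighborhood $U_X$, which is where the entire difficulty of the proposition sits. You assert that since ${\rm Re}\,q(\exp(sH_{{\rm Im}\,q})(X)) \geq c_X s^{2k_0}$ on $[0,t_X]$, ``by continuity'' the same bound with $c_X/2$ holds for all $Y$ in a neighborhood of $X$. Joint continuity of $f(s,Y)={\rm Re}\,q(\exp(sH_{{\rm Im}\,q})(Y))$ only yields $\abs{f(s,Y)-f(s,X)}<\varepsilon$ uniformly on a fixed neighborhood, and since the lower bound $c_Xs^{2k_0}$ tends to $0$ as $s\to 0^+$, a fixed error $\varepsilon$ destroys it for small $s$. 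Worse, the claimed pointwise bound is simply false in general: whenever $j(X)\geq 1$, i.e.\ ${\rm Re}\,q(X)=0$, the points $Y_s=\exp(-sH_{{\rm Im}\,q})(X)/\abs{\exp(-sH_{{\rm Im}\,q})(X)}$ lie on the sphere, converge to $X$ as $s\to 0^+$, and satisfy $f(s,Y_s)=0$ for every $s>0$. So no neighborhood $U_X$ admits a uniform pointwise lower bound $f(s,Y)\geq \frac{c_X}{2}s^{2k_0}$; only the \emph{integral} $J(t,Y)$ satisfies such a bound, and that is exactly what has to be proved. (The same objection applies if one tries the continuity argument on $J$ itself: $\abs{J(t,Y)-J(t,X)}$ is only $O(t\abs{Y-X})$, which is not $o(t^{2k_0+1})$ on a fixed neighborhood.)

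The paper closes this gap by a contradiction argument carried out directly on $J$: assuming $J(t_\nu,Y_\nu)/t_\nu^{2j+1}\to 0$ with $Y_\nu\to X$, one rescales $u_\nu(s)=J(t_\nu s,Y_\nu)/t_\nu^{2j+1}$, writes $u_\nu$ as a polynomial of degree $2j+1$ in $s$ plus an $O(t_\nu)$ remainder, and invokes the equivalence of norms on the finite-dimensional space of such polynomials to conclude that \emph{all} Taylor coefficients of $J(\cdot,Y_\nu)$ at $t=0$ up to order $2j+1$ tend to zero --- in particular $\partial_t^{2j+1}J(0,Y_\nu)$, contradicting the continuity of $Y\mapsto (H_{{\rm Im}\,q}^{2j}{\rm Re}\,q)(Y)$ and its positivity at $X$. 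Some device of this kind, extracting smallness of the individual low-order Taylor coefficients rather than mere proximity of the functions, is needed where your continuity step currently stands. (A minor additional point: in your first, abandoned attempt, $s\mapsto {\rm Re}\,q(\exp(sH_{{\rm Im}\,q})(X))$ is analytic but not a polynomial in $s$ in general, so the finite expansion must be Taylor's formula with remainder, as you do use in the second attempt.)
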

\begin{proof}
We proceed similarly to the proof of Proposition 3.2 in~\cite{Sj10}. Let $X\in \real^{2n}$ be such that $\abs{X} = 1$ and let $j\in \nat$,
$0\leq j \leq k_0$, be such that
\begeq
\label{eq2.23}
{\rm Re}\,q\left(\exp(tH_{{\rm Im}\,q})(X)\right) = at^{2j} + {\cal O}(t^{2j+1}),\quad t\rightarrow 0,
\endeq
where $a > 0$. Here we use the property (\ref{eq1.71}). We claim that there exist $C_X > 0$, $t_X\in (0,1)$, and a neighborhood
$V_X$ of $X$ in $S^{n-1}$ such that for all $t\in (0,t_X]$ and all $Y\in V_X$, we have
\begeq
\label{eq2.24}
J(t,Y) \geq \frac{t^{2j+1}}{C_X}.
\endeq
Seeking a contradiction, let us assume that (\ref{eq2.24}) does not hold. Then there exist sequences $0 < t_{\nu} \rightarrow 0$, $Y_{\nu} \in S^{n-1}$,
$Y_{\nu} \rightarrow X$ such that
\begeq
\label{eq2.25}
\frac{J(t_{\nu},Y_{\nu})}{t_{\nu}^{2j+1}}\rightarrow 0,\quad \nu \rightarrow \infty.
\endeq
Using the fact that the function $t\mapsto J(t,X)$ is increasing, we get from (\ref{eq2.25}),
\begeq
\label{eq2.26}
\sup_{0\leq t \leq t_{\nu}} \frac{J(t,Y_{\nu})}{t_{\nu}^{2j+1}} \rightarrow 0.
\endeq
A Taylor expansion gives that
\begeq
\label{eq2.26.0}
J(t,Y_{\nu}) = a_{\nu}^{(0)} + a_{\nu}^{(1)}t + \ldots\, + a_{\nu}^{(2j+1)} t^{2j+1} + {\cal O}(t^{2j+2}),\quad t\rightarrow 0,
\endeq
and let us define
$$
0 \leq u_{\nu}(s) = \frac{J(t_{\nu}s,Y_{\nu})}{t_{\nu}^{2j+1}},\quad 0\leq s \leq 1,
$$
so that according to (\ref{eq2.26}), we have
\begeq
\label{eq2.26.1}
\sup_{0\leq s \leq 1} u_{\nu}(s) \rightarrow 0,\quad \nu \rightarrow \infty.
\endeq
On the other hand, using (\ref{eq2.26.0}), we write
\begeq
\label{eq2.26.2}
u_{\nu}(s) = p_{\nu}(s) + {\cal O}(t_{\nu} s^{2j+2}),
\endeq
where $p_{\nu}$ is a polynomial of degree $2j+1$ given by
$$
p_{\nu}(s) = \frac{a_{\nu}^{(0)}}{t_{\nu}^{2j+1}} + \frac{a_{\nu}^{(1)}}{t_{\nu}^{2j}}s + \ldots\, + a_{\nu}^{(2j+1)} s^{2j+1}.
$$
It follows from (\ref{eq2.26.1}) and (\ref{eq2.26.2}) that $p_{\nu}\rightarrow 0$ uniformly on $[0,1]$ as $\nu\rightarrow \infty$, and since all norms on a
finite-dimensional vector space are equivalent, we see that the coefficients of $p_{\nu}$ all tend to $0$ as $\nu \rightarrow \infty$. In particular,
$$
a_{\nu}^{(2j+1)} = \frac{1}{(2j+1)!} \left(\partial_t^{2j+1} J\right)(0,Y_{\nu}) \rightarrow 0.
$$
On the other hand, (\ref{eq2.23}) shows that
$$
\partial_t^{2j+1} J(0,X) = (2j)! a > 0,
$$
and this contradiction establishes the claim. In particular, we see that for all $t\in [0,t_X]$ and all $Y\in V_X$ we have
$$
J(t,Y) \geq \frac{t^{2k_0+1}}{C_X}.
$$
Covering the compact set $S^{n-1}$ by finitely many open neighborhoods of the form $V_{X_1},\ldots\, V_{X_N}$ and letting $C = {\rm max}_{1\leq j \leq N} C_{X_j}$,
$t_0 = {\rm min}_{1\leq j \leq N} t_{X_j}>0$, we conclude that for all $Y\in S^{n-1}$, we have
$$
J(t,Y) \geq \frac{t^{2k_0+1}}{C},\quad 0\leq t \leq t_0.
$$
Using the fact that $Y\mapsto J(t,Y)$ is quadratic, we conclude the proof.
\end{proof}

\medskip
\noindent
Coming back to (\ref{eq2.20.3}), we observe that the conclusion of Proposition 2.2 remains valid also for the function
$$
\widetilde{J}(t,X) = \int_0^t {\rm Re}\,q\left(\exp(sH_{{\rm -Im}\,q})(X)\right)\,ds.
$$
Indeed, the Hamilton map of the quadratic form $X \mapsto \overline{q(X)}$ is the complex conjugate $\overline{F}$ of the Hamilton map $F$ of $q$, and it
follows therefore that the assumptions of Theorem \ref{theo_main} are also valid for $\overline{q}$, with the same value of $k_0$. Combining this observation with (\ref{eq2.20.3}) we obtain
that there exist $C>0$ and $t_0 > 0$ such that the following estimate holds,
\begeq
\label{eq2.27}
G_t(X) \geq \frac{t^{2k_0+1}}{C}\abs{X}^2,\quad t\in [0,t_0],\quad X\in \real^{2n}.
\endeq
We next observe that, as explained in~\cite[Section 2]{Sj10}, the inequality (\ref{eq2.27}) together with (\ref{eq2.19}) shows that
\begeq
\label{eq2.27.1}
\Phi_t(x) \geq \Psi_t(x),\quad x\in \comp^n,\quad t\in [0,t_0],
\endeq
where
\begeq
\label{eq2.28}
\Psi_t(x) = {\rm vc}_{(y,\eta)\in {\bf C}^n \times {\bf R}^n} \left(-{\rm Im}\, \varphi(x,y) - \eta\cdot {\rm Im}\, y +
\frac{t^{2k_0+1}}{C}\abs{({\rm Re}\,y,\eta)}^2\right).
\endeq
The unique critical point in (\ref{eq2.28}) satisfies
$$
y(x,t) = y_c(x) + {\cal O}(t^{2k_0 +1}\abs{x}),\quad \eta(x,t) = -\varphi'_y(x,y_c(x)) + {\cal O}(t^{2k_0 +1}\abs{x}),
$$
where $y_c(x)\in \real^n$ is the unique point such that
$$
\Phi_0(x) = -{\rm Im}\, \varphi(x,y_c(x)).
$$
Thus, $y_c(x)$ is the unique critical point of the function
$$
\real^n \ni y \mapsto -{\rm Im}\,\varphi(x,y),
$$
so that $\varphi'_y(x,y_c(x))$ is real. From~\cite[Chapter 7]{Sj82},~\cite[Section 1.3]{HiSj15} we also recall that the map
$\comp^n \ni x\mapsto (y_c(x),-\varphi'_y(x,y_c(x))) \in \real^n\times \real^n$ is a real linear isomorphism. It follows that
\begin{multline}
\label{eq2.28.1}
\Psi_t(x) = \Phi_0(x) + \frac{t^{2k_0+1}}{C} \abs{(y_c(x),-\varphi'_y(x,y_c(x)))}^2 + {\cal O}(t^{4k_0 +2}\abs{x}^2) \\
\geq \Phi_0(x) + \frac{t^{2k_0+1}}{{\cal O}(1)}\abs{x}^2, \quad 0 \leq t\leq t_0 \ll 1,\quad x\in \comp^{n}.
\end{multline}
In view of (\ref{eq2.27.1}) and (\ref{eq2.28.1}), the proof of Theorem \ref{theo_sec2} is complete.

\medskip
\noindent
{\it Remark}. For future reference, let us notice that the arguments developed in this section apply equally well to the bounded operator
$$
e^{-tQ}: H_{\Phi_0}(\comp^n) \rightarrow H_{\Phi_0}(\comp^n), \quad t\geq 0,
$$
and allow us to conclude that for all $t\in [0,t_0]$, with $t_0 > 0$ sufficiently small, the operator $e^{-tQ}$ is bounded,
\begeq
\label{eq2.29}
e^{-tQ}: H_{\Phi_0}(\comp^n) \rightarrow H_{\widetilde{\Phi}_t}(\comp^n),
\endeq
where $\widetilde{\Phi}_t$ is a real strictly plurisubharmonic quadratic form on $\comp^n$, depending smoothly on $t\in [0,t_0]$, such that
\begeq
\label{eq2.30}
\widetilde{\Phi}_t(x) \leq \Phi_0(x) - \frac{t^{2k_0+1}}{C}\abs{x}^2,\quad x\in \comp^n,\quad t\in [0,t_0],
\endeq
for some constant $C> 0$. This result can be viewed as a sharpening of~\cite[Lemma 3.1.2]{HPS09}.

\section{Subelliptic estimates}
\setcounter{equation}{0}
In this section, we shall complete the proof of Theorem \ref{theo_main}, relying crucially on the estimate (\ref{eq2.16}). When doing so, let us recall
from~\cite[Theorem 1.2.2]{HPS09} that the spectrum of the closed densely defined quadratic operator $Q: H_{\Phi_0}(\comp^n) \rightarrow H_{\Phi_0}(\comp^n)$ is discrete,
contained in the open right half plane $\{z\in \comp;\, {\rm Re}\, z > 0\}$. Here the domain of $Q$ is given by
$$
{\cal D}(Q) = \{u\in H_{\Phi_0}(\comp^n); Qu\in H_{\Phi_0}(\comp^n)\}.
$$
We shall be concerned with deriving estimates for the bounded operator
$$
Q^{-1}: H_{\Phi_0}(\comp^n) \rightarrow H_{\Phi_0}(\comp^n),
$$
and in doing so we write
\begeq
\label{eq3.1}
Q^{-1} = \int_0^{\infty} e^{-tQ}\,dt.
\endeq
Here
$$
e^{-tQ}: H_{\Phi_0}(\comp^n) \rightarrow H_{\Phi_0}(\comp^n),\quad t\geq 0,
$$
is the strongly continuous contraction semigroup generated by $Q$, and from~\cite[Theorem 1.2.3]{HPS09} we recall that the norm of $e^{-tQ}$ on $H_{\Phi_0}(\comp^n)$
decays exponentially as $t\rightarrow +\infty$, so that the integral in (\ref{eq3.1}) converges in ${\cal L}(H_{\Phi_0}(\comp^n),H_{\Phi_0}(\comp^n))$.

\medskip
\noindent
Let $t_0 > 0$ be small enough fixed and observe that using the semigroup property we can write,
\begeq
\label{eq3.01}
Q^{-1} = \int_0^{t_0} e^{-tQ}\, dt + \int_{t_0}^{\infty} e^{-tQ}\,dt = \int_0^{t_0} e^{-tQ}\, dt + e^{-t_0 Q} Q^{-1}.
\endeq
It follows from~\cite{HPS09}, see also (\ref{eq2.29}), (\ref{eq2.30}), that there exists $\eta > 0$ such that the operator $e^{-t_0 Q}$ is bounded,
$$
e^{-t_0 Q}: H_{\Phi_0}(\comp^n) \rightarrow H_{\Phi_0 - \eta \abs{x}^2}(\comp^n),
$$
and we shall therefore begin by discussing estimates for the first term in the right hand side of (\ref{eq3.01}), given by the operator
$$
R= \int_0^{t_0} e^{-tQ}\, dt: H_{\Phi_0}(\comp^n) \rightarrow H_{\Phi_0}(\comp^n).
$$

\medskip
\noindent
Let $u$, $v$ be holomorphic functions on $\comp^n$, such that
$$
\langle{x\rangle}^N u,\quad \langle{x\rangle}^N v\in L^2_{\Phi_0}(\comp^n),
$$
for all $N\in \nat$. Consider the scalar product
\begeq
\label{eq3.2}
(R u,v)_{H_{\Phi_0}} = \int\!\!\!\int 1_{[0,t_0]}(t)(e^{-tQ}u(x))\overline{v(x)} e^{-2\Phi_0(x)}\, L(dx)\, dt,
\endeq
which can be written as follows,
\begeq
\label{eq3.3}
(R u,v)_{H_{\Phi_0}} = \int_0^{t_0} (e^{-tQ}u,v)_{H_{\Phi_0}}\,dt = \int_0^{t_0} (e^{-tQ/2}u,e^{-tQ^*/2}v)_{H_{\Phi_0}}\,dt.
\endeq
Here we have used the semigroup property of $e^{-tQ}$ and the fact that the adjoint semigroup of $e^{-tQ}$ is generated by the adjoint $Q^*$ of $Q$ in
$H_{\Phi_0}(\comp^n)$, see ~\cite[Corollary 1.10.6]{Pazy}. From~\cite[Section 4]{Ho95},~\cite[Section 3]{HPS13}, we recall that $Q^*$ is a closed densely
defined holomorphic quadratic differential operator on $H_{\Phi_0}(\comp^n)$ such that
$$
Q^* = T\overline{q}^w(y,D_y) T^{-1},
$$
where the domain of $\overline{q}^w(y,D_y)$ is given by $\{u\in L^2(\real^n);\, \overline{q}^w(y,D_y) u\in L^2(\real^n)\}$. Furthermore, as already observed in
Section 2, the quadratic form $X\mapsto \overline{q(X)}$ also satisfies the assumptions of Theorem \ref{theo_main}, with the same value of $k_0$.

\medskip
\noindent
Using (\ref{eq3.3}), we get
\begeq
\label{eq3.3.1}
(R u,v)_{H_{\Phi_0}} = \int\!\!\!\int 1_{[0,t_0]}(t) \left(e^{-tQ/2}u(x)\right) \overline{\left(e^{-tQ^*/2}v(x)\right)} e^{-2\Phi_0(x)}\, L(dx)\, dt,
\endeq
and by the Cauchy-Schwarz inequality, we conclude that
$$
\abs{(R u,v)_{H_{\Phi_0}}}^2
$$
does not exceed the product
\begin{multline}
\label{eq3.4}
\left(\int\!\!\!\int 1_{[0,t_0]}(t) \abs{e^{-tQ/2}u(x)}^2 e^{-2\Phi_0(x)}\, L(dx)\, dt\right)\times \\
\left(\int\!\!\!\int 1_{[0,t_0]}(t) \abs{e^{-tQ^*/2}v(x)}^2 e^{-2\Phi_0(x)}\, L(dx)\, dt\right).
\end{multline}

\medskip
\noindent
When estimating (\ref{eq3.4}), we observe that according to the discussion in Section 2, we have a bounded quadratic elliptic Fourier integral operator in the
complex domain,
\begeq
\label{eq3.5}
e^{tQ/2}: H_{\Phi_0}(\comp^n) \rightarrow H_{\Phi_{t/2}}(\comp^n), \quad 0\leq t\leq t_0,
\endeq
where the strictly plurisubharmonic quadratic form $\Phi_t$ is given by (\ref{eq2.13}), (\ref{eq2.15}). An application of~\cite[Proposition B.4]{CaGrHiSj}
allows us to conclude that the operator in (\ref{eq3.5}) has a continuous two-sided inverse,
\begeq
\label{eq3.51}
e^{-tQ/2}: H_{\Phi_{t/2}}(\comp^n) \rightarrow H_{\Phi_0}(\comp^n),\quad 0\leq t \leq t_0,
\endeq
with the operator norm $\norm{e^{-tQ/2}}_{{\cal L}(H_{\Phi_{t/2}}({\bf C}^n), H_{\Phi_0}({\bf C}^n))}$ uniformly bounded, for $t\in [0,t_0]$. Let us write,
when $t\in [0,t_0]$, with $t_0 > 0$ sufficiently small,
\begeq
\label{eq3.6}
\int\abs{e^{-tQ/2}u(x)}^2 e^{-2\Phi_0(x)}\, L(dx) \leq C(t_0) \int\abs{u(x)}^2 e^{-2\Phi_{t/2}(x)} \, L(dx),
\endeq
for some constant $C(t_0)>0$. Integrating (\ref{eq3.6}) with respect to $t\in [0,t_0]$ and using Theorem \ref{theo_sec2}, we conclude that the first factor in (\ref{eq3.4})
does not exceed a constant times
\begeq
\label{eq3.61}
\int\!\!\!\int 1_{[0,t_0]}(t) \abs{u(x)}^2 e^{-2\Phi_0(x)} e^{-t^{2k_0+1}\abs{x}^2/C}\, L(dx)\,dt,
\endeq
for some $C > 0$. Here we can carry out the $t$--integration first, and to this end, we observe that
$$
\int_0^{t_0} e^{-t^{2k_0+1}\abs{x}^2/C}\,dt = {\cal O}(1),\quad \wrtext{for}\quad \abs{x}\leq 1,
$$
while for $\abs{x}\geq 1$, we get by a change of variables,
$$
\int_0^{t_0} e^{-t^{2k_0+1}\abs{x}^2/C}\,dt \leq \frac{1}{(2k_0+1)\abs{x}^{2/(2k_0+1)}} \int_0^{\infty} e^{-y/C} y^{-\frac{2k_0}{2k_0+1}}\,dy.
$$
Therefore,
\begeq
\label{eq3.62}
\int_0^{t_0} e^{-t^{2k_0+1}\abs{x}^2/C}\,dt \leq \frac{{\cal O}(1)}{(1 + \abs{x}^2)^{1/(2k_0+1)}},\quad x\in \comp^n,
\endeq
and combining (\ref{eq3.61}) and (\ref{eq3.62}), we obtain the following bound for the first factor in (\ref{eq3.4}),
\begin{multline}
\label{eq3.63}
\int\!\!\!\int 1_{[0,t_0]}(t) \abs{e^{-tQ/2}u(x)}^2 e^{-2\Phi_0(x)}\, L(dx)\, dt \\
\leq {\cal O}(1) \int (1+\abs{x}^2)^{-1/(2k_0+1)}\abs{u(x)}^2 e^{-2\Phi_0(x)}\, L(dx).
\end{multline}

\medskip
\noindent
To estimate the second factor in (\ref{eq3.4}), we observe that all of the analysis developed in Section 2 can be applied also to the operator $e^{tQ^*}$, and
in particular, we have the bounded operator
\begeq
\label{eq3.64}
e^{tQ^*}: H_{\Phi_0}(\comp^n) \rightarrow H_{\widehat{\Phi}_t}(\comp^n),\quad 0 \leq t \leq t_0,
\endeq
where, similarly to (\ref{eq2.16}), the quadratic form $\widehat{\Phi}_t$ satisfies
\begeq
\label{eq3.65}
\widehat{\Phi}_t(x) \geq \Phi_0(x) + \frac{t^{2k_0+1}}{C}\abs{x}^2,\quad x\in \comp^n,\quad t\in [0,t_0].
\endeq
Combining (\ref{eq3.3.1}), (\ref{eq3.4}), (\ref{eq3.63}), as well as the analog of the latter estimate involving $e^{-tQ^*/2}$, we obtain the estimate
\begeq
\label{eq3.7}
\abs{(Ru,v)_{H_{\Phi_0}}}\leq C \norm{\langle{x\rangle}^{-\delta}u}_{L^2_{\Phi_0}} \norm{\langle{x\rangle}^{-\delta}v}_{L^2_{\Phi_0}},
\endeq
where
\begeq
\label{eq3.71}
\delta = \frac{1}{2k_0+1}.
\endeq
Coming back to (\ref{eq3.01}), we shall next consider the contribution of the term $e^{-t_0 Q} Q^{-1}$, and to that end we write as before,
\begeq
\label{eq3.8}
(e^{-t_0 Q} Q^{-1} u,v)_{H_{\Phi_0}} = (e^{-t_0 Q/2} Q^{-1}u, e^{-t_0 Q^*/2}v)_{H_{\Phi_0}} = (Q^{-1}e^{-t_0 Q/2}u, e^{-t_0 Q^*/2}v)_{H_{\Phi_0}},
\endeq
using also the fact that the bounded operators $e^{-tQ}$ and $Q^{-1}$ commute. Applying the Cauchy-Schwarz inequality and the fact that $Q^{-1}$ is bounded on
$H_{\Phi_0}(\comp^n)$, we get
\begeq
\label{eq3.9}
\abs{(e^{-t_0 Q} Q^{-1} u,v)_{H_{\Phi_0}}} \leq {\cal O}(1) \norm{ e^{-t_0 Q/2}u}_{H_{\Phi_0}}\, \norm{ e^{-t_0 Q^*/2}v}_{H_{\Phi_0}}.
\endeq
An application of (\ref{eq3.51}) together with Theorem \ref{theo_sec2}, along with their natural analogs for the adjoint semigroup $e^{-tQ^*}$,
see (\ref{eq3.64}), (\ref{eq3.65}), allows us then to conclude that
\begeq
\label{eq3.10}
\abs{(e^{-t_0 Q} Q^{-1} u,v)_{H_{\Phi_0}}} \leq {\cal O}(1) \norm{u e^{-\eta\abs{x}^2}}_{L^2_{\Phi_0}} \norm{v e^{-\eta\abs{x}^2}}_{L^2_{\Phi_0}},
\endeq
for some $\eta > 0$. Combining (\ref{eq3.01}), (\ref{eq3.7}), and (\ref{eq3.10}), we obtain the basic estimate
\begeq
\label{eq3.11}
\abs{(Q^{-1}u,v)_{H_{\Phi_0}}}\leq C \norm{\langle{x\rangle}^{-\delta}u}_{L^2_{\Phi_0}} \norm{\langle{x\rangle}^{-\delta}v}_{L^2_{\Phi_0}},
\endeq
where the factor $0 < \delta < 1$ has been defined in (\ref{eq3.71}). Let us also recall that here $u,v$ are holomorphic and such that
$\langle{x\rangle}^N u$, $\langle{x\rangle}^N v \in L^2_{\Phi_0}(\comp^n)$ for all $N\in  \nat$.

\bigskip
\noindent
Next, we shall pass to Toeplitz operators, and to that end we would like to replace $\langle{x\rangle}^{-\delta}u$ in (\ref{eq3.11}) by 
$\Pi \left(\langle{x\rangle}^{-\delta}u\right)$, and similarly for $\langle{x\rangle}^{-\delta}v$, where
$$
\Pi: L^2_{\Phi_0}(\comp^n) \rightarrow H_{\Phi_0}(\comp^n)
$$
is the orthogonal projection. Here, given $q\in L^{\infty}(\comp^n)$, we introduce the Toeplitz operator with the symbol $q$, acting on 
$H_{\Phi_0}(\comp^n)$, see~\cite[Section 1]{Sj95},~\cite[Chapter 13]{Zworski},
\begeq
\label{eq3.11.1}
{\rm Top}(q) u = \Pi (qu),\quad u \in H_{\Phi_0}(\comp^n).
\endeq
It will be convenient to rewrite (\ref{eq3.11}) as follows,
\begeq
\label{eq3.12}
\abs{(Q^{-1}u,v)_{H_{\Phi_0}}}\leq C(\lambda) \norm{(\lambda + \abs{x}^2)^{-\delta/2}u}_{L^2_{\Phi_0}} \norm{(\lambda + \abs{x}^2)^{-\delta/2}v}_{L^2_{\Phi_0}},
\endeq
where $\lambda \geq 1$ is to be chosen sufficiently large but fixed, and the constant $C(\lambda) > 0$ depends on $\lambda$. Using that $\Pi u =u$, we get
\begeq
\label{eq3.12.1}
\norm{(\lambda + \abs{x}^2)^{-\delta/2}u}_{L^2_{\Phi_0}} \leq \norm{\Pi (\lambda + \abs{x}^2)^{-\delta/2}u}_{H_{\Phi_0}} +
\norm{[\Pi,(\lambda + \abs{x}^2)^{-\delta/2}]u}_{L^2_{\Phi_0}}.
\endeq
Here we can write $\Pi (\lambda + \abs{x}^2)^{-\delta/2} u = {\rm Top}((\lambda + \abs{x}^2)^{-\delta/2})u$, see (\ref{eq3.11.1}). We next claim that
\begeq
\label{eq3.13}
K := [\Pi, (\lambda + \abs{x}^2)^{-\delta/2}](\lambda + \abs{x}^2)^{(\delta +1)/2} = {\cal O}(1): L^2_{\Phi_0}(\comp^n) \rightarrow L^2_{\Phi_0}(\comp^n),
\endeq
uniformly in $\lambda \geq 1$. When verifying the claim, let us recall from~\cite[Section 1]{Sj95},~\cite[Proposition 1.3.4]{HiSj15}, that the operator
$\Pi$ is given by
$$
\Pi u(x) = C \int e^{2\psi_0(x,\overline{y})} u(y) e^{-2\Phi_0(y)}\, L(dy),\quad C > 0,
$$
where $\psi_0(x,y)$ is the unique holomorphic quadratic form on $\comp^n_x \times \comp^n_y$, such that $\psi_0(x,\overline{x}) = \Phi_0(x)$. We have the
basic property,
\begeq
\label{3.13.1}
2{\rm Re}\, \psi_0(x,\overline{y}) - \Phi_0(x) - \Phi_0(y) \sim -\abs{x-y}^2,
\endeq
on $\comp^n_x \times \comp^n_y$, reflecting the strict plurisubharmonicity of $\Phi_0$, see~\cite[Section 1.3]{HiSj15}.
Letting $\Pi(x,y)$ stand for the integral kernel of $\Pi$, we have therefore,
\begeq
\label{eq3.13.2}
\abs{\Pi(x,y)}e^{\Phi_0(y) - \Phi_0(x)} \leq {\cal O}(1) e^{-\abs{x-y}^2/C},\quad C > 0.
\endeq

\medskip
\noindent
The integral kernel of the operator $K$ in (\ref{eq3.13}) is given by
\begeq
\label{eq3.14}
K(x,y) = \Pi(x,y)\left((\lambda + \abs{y}^2)^{-\delta/2} - (\lambda + \abs{x}^2)^{-\delta/2}\right) (\lambda + \abs{y}^2)^{(\delta +1)/2},
\endeq
and when estimating it, we observe that
\begeq
\label{eq3.14.1}
\abs{(\lambda + \abs{y}^2)^{-\delta/2} - (\lambda + \abs{x}^2)^{-\delta/2}} \leq {\cal O}(1)\abs{x-y}
\int_0^1 \frac{dt}{\left(\lambda + \abs{tx + (1-t)y}^2\right)^{(\delta +1)/2}},
\endeq
where ${\cal O}(1)$ is uniform in $\lambda \geq 1$. The standard inequality
$$
\frac{1 + \abs{x}}{1 + \abs{y}} \leq 1 + \abs{x-y}
$$
shows next that uniformly in $t\in [0,1]$ and $\lambda \geq 1$, we have
\begeq
\label{eq3.14.2}
\left(\frac{\lambda + \abs{y}^2}{\lambda + \abs{tx + (1-t)y}^2}\right)^{1/2} \leq 2\left(1 + \abs{x-y}\right).
\endeq
Combining (\ref{eq3.13.2}), (\ref{eq3.14.1}), and (\ref{eq3.14.2}), we conclude that the absolute value of the reduced kernel
$e^{-\Phi_0(x)} K(x,y) e^{\Phi_0(y)}$ of the operator $K$ in (\ref{eq3.13})
does not exceed
$$
{\cal O}(1)e^{-\abs{x-y}^2/C} \abs{x-y}\left(1+\abs{x-y}\right)^{\delta +1},
$$
uniformly in $\lambda \geq 1$. An application of Schur's lemma shows that (\ref{eq3.13}) holds.

\medskip
\noindent
Combining (\ref{eq3.12.1}) and (\ref{eq3.13}), we get
$$
\norm{(\lambda + \abs{x}^2)^{-\delta/2}u}_{L^2_{\Phi_0}} \leq \norm{{\rm Top}((\lambda + \abs{x}^2)^{-\delta/2})u}_{H_{\Phi_0}} +
{\cal O}(\lambda^{-1/2})\norm{(\lambda + \abs{x}^2)^{-\delta/2}u}_{L^2_{\Phi_0}},
$$
and choosing $\lambda > 1$ sufficiently large but fixed, we conclude that
\begeq
\label{eq3.15}
\norm{(\lambda + \abs{x}^2)^{-\delta/2}u}_{L^2_{\Phi_0}} \leq {\cal O}(1) \norm{{\rm Top}((\lambda + \abs{x}^2)^{-\delta/2})u}_{H_{\Phi_0}}.
\endeq
The parameter $\lambda$ will be kept fixed from now on and the dependence on $\lambda$ will not be indicated explicitly. Injecting the estimate
(\ref{eq3.15})  back into (\ref{eq3.12}), we obtain that
\begeq
\label{eq3.16}
\abs{(Q^{-1}u,v)_{H_{\Phi_0}}}\leq C \norm{{\rm Top}((\lambda + \abs{x}^2)^{-\delta/2})u}_{H_{\Phi_0}}
\norm{{\rm Top}((\lambda + \abs{x}^2)^{-\delta/2})v}_{H_{\Phi_0}}.
\endeq

\bigskip
\noindent
We shall now return to the real side by undoing the FBI transform. When doing so, let us recall from~\cite[Section 1]{Sj95} as well as
from~\cite[Theorems 13.9, 13.10]{Zworski}, that there exists $a\in C^{\infty}(\real^{2n})$ such that for all $\alpha \in \nat^{2n}$, we have
\begeq
\label{eq3.16.01}
\partial^{\alpha}a(X) = {\cal O}_{\alpha}(1)m(X),\quad m(X) = \langle{X\rangle}^{-\delta},
\endeq
and such that
\begeq
\label{eq3.16.1}
{\rm Top}((\lambda + \abs{x}^2)^{-\delta/2}) = T a^w(y,D_y) T^{-1}.
\endeq
Indeed, it follows from~\cite[Section 1]{Sj95} that we have (\ref{eq3.16.1}) with $a = b \circ \kappa_T$, where $b \in S(\Lambda_{\Phi_0})$ is such that
$$
b\left(x,\frac{2}{i}\frac{\partial \Phi_0}{\partial x}(x)\right) =
\left(\exp\left(\frac{1}{4} \left(\partial_x \partial_{\bar x} \Phi_0\right)^{-1}\partial_x \cdot \partial_{\bar x}\right)q\right)(x),\quad
q(x) = (\lambda + \abs{x}^2)^{-\delta/2}.
$$
It follows that when $u,v\in {\cal S}(\real^n)$, we get from (\ref{eq3.16}),
\begeq
\label{eq3.17}
\abs{((q^{w})^{-1}u,v)_{L^2}}\leq C \norm{a^w(x,D_x) u}_{L^2} \norm{a^w(x,D_x) v}_{L^2}.
\endeq

\bigskip
\noindent
It will be convenient to rewrite (\ref{eq3.17}) using the positive selfadjoint operator
$\Lambda = \left(1 + x^2 + D_x^2\right)^{1/2}$ on $L^2(\real^n)$, defined by means of the functional calculus for the harmonic oscillator. To that end, let us
set
$$
\Gamma = \frac{dx^2 + d\xi^2}{\langle{X\rangle}^2},\quad X = (x,\xi)\in \real^{2n},
$$
and introduce the corresponding symbol class
$$
S(\langle{X\rangle}^m, \Gamma) = \{a\in C^{\infty}(\real^{2n}); \,\,\abs{\partial^{\alpha} a(X)} \leq C_{\alpha} \langle{X\rangle}^{m-\abs{\alpha}},\,\,\alpha \in \nat^{2n}\},
\quad m\in \real.
$$
Let us recall from~\cite[Theorem 1.11.1]{Hel} that
\begeq
\label{eq3.17.1}
\Lambda^r \in {\rm Op}^w\left(S(\langle{X\rangle}^r, \Gamma)\right),\quad r\in \real.
\endeq
Using (\ref{eq3.16.01}), (\ref{eq3.17.1}), and the Calder\'on-Vaillancourt theorem, we conclude that the operator
$$
a^w(x,D_x)\Lambda^{\delta}: L^2(\real^n)\rightarrow L^2(\real^n)
$$
is bounded. We get therefore from (\ref{eq3.17}), for $u,v\in {\cal S}(\real^n)$,
\begeq
\label{eq3.18}
\abs{((q^{w})^{-1}u,v)_{L^2}}\leq C \norm{\Lambda^{-\delta}u}_{L^2} \norm{\Lambda^{-\delta}v}_{L^2}.
\endeq

\medskip
\noindent
Using that $\Lambda^{\delta}$ is a bijection on ${\cal S}(\real^n)$ and rewriting (\ref{eq3.18}) in the form
\begeq
\label{eq3.18.1}
\abs{((q^{w})^{-1}u,\Lambda^{\delta} v)_{L^2}}\leq C \norm{\Lambda^{-\delta}u}_{L^2} \norm{v}_{L^2},\quad u,v\in {\cal S}(\real^n),
\endeq
we observe that (\ref{eq3.18.1}) extends to all $u\in L^2(\real^n)$ and $v\in {\cal D}(\Lambda^{\delta})\subseteq L^2(\real^n)$, since ${\cal S}(\real^n)$ is dense
in ${\cal D}(\Lambda^{\delta})$ with respect to the graph norm. The estimate
$$
\abs{((q^{w})^{-1}u,\Lambda^{\delta} v)_{L^2}}\leq C \norm{\Lambda^{-\delta}u}_{L^2} \norm{v}_{L^2},\quad u\in L^2(\real^n),\,\, v\in {\cal D}(\Lambda^{\delta})
$$
implies that ${\cal D}(q^w)\subset {\cal D}(\Lambda^{\delta})$, with
\begeq
\label{eq3.19}
\norm{\Lambda^{\delta} (q^w)^{-1}u}_{L^2} \leq C\norm{\Lambda^{-\delta}u}_{L^2},\quad u\in L^2(\real^n).
\endeq

\medskip
\noindent
Using the bound (\ref{eq3.19}), it is now easy to conclude the proof of Theorem \ref{theo_main}. It follows from (\ref{eq3.19}) that
\begeq
\label{eq3.20}
\norm{\Lambda^{\delta} v}_{L^2} \leq C\norm{\Lambda^{-\delta}q^w v}_{L^2},\quad v\in {\cal S}(\real^n),
\endeq
and taking $v = \Lambda^{\delta}u$, $u\in {\cal S}(\real^n)$, we obtain in view of (\ref{eq3.20}),
\begeq
\label{eq3.21}
\norm{\Lambda^{2\delta} u}_{L^2} \leq C\left(\norm{q^w u}_{L^2} + \norm{\Lambda^{-\delta}[q^w, \Lambda^{\delta}]u}_{L^2}\right)
\leq C\left(\norm{q^w u}_{L^2} + \norm{u}_{L^2}\right).
\endeq
Here we have used that since $q$ is quadratic, the Weyl symbol of the operator $[q^w,\Lambda^{\delta}]$ is of the form
$i^{-1}H_q b\in S(\langle{X\rangle}^{\delta}, \Gamma)$, for some $b\in S(\langle{X\rangle}^{\delta},\Gamma)$, and therefore the operator
$\Lambda^{-\delta}[q^w, \Lambda^{\delta}] \in {\rm Op}^w(S(1,\Gamma))$ is bounded on $L^2(\real^n)$.
The proof of Theorem 1.1 is complete.

\bigskip
\noindent
{\it Remark}. The arguments developed in this section allow us to establish the following subelliptic resolvent estimate, generalizing (\ref{eq3.21}): there
exists a constant $C>0$ such that for all $u\in {\cal D}(q^w)$ and all $\lambda \in \real$, we have
\begeq
\label{eq3.22}
\norm{\Lambda^{2\delta} u}_{L^2} \leq C\left(\norm{(q^w - i\lambda) u}_{L^2} + \norm{u}_{L^2}\right).
\endeq
Indeed, in order to obtain (\ref{eq3.22}), we inspect the arguments of the present section, concluding that everything works as above, leading to (\ref{eq3.22}),
provided that we take as our starting point the representation of the resolvent along the imaginary axis as the Fourier transform of the semigroup,
$$
(Q - i \lambda)^{-1} = \int_0^{\infty} e^{-t(Q-i\lambda)}\, dt,\quad \lambda \in \real,
$$
and observe that thanks to the exponential decay of the norm of the semigroup $e^{-tQ}$, as $t\rightarrow \infty$, established in~\cite[Theorem 1.2.3]{HPS09},
we have
$$
\norm{(Q-i\lambda)^{-1}}_{{\cal L}(H_{\Phi_0}({\bf C}^n),H_{\Phi_0}({\bf C}^n))} \leq {\cal O}(1),
$$
uniformly in $\lambda \in \real$. As explained in~\cite[Section 3]{Nier} and~\cite[Subsection 3.2]{OPaPS}, the subelliptic estimate (\ref{eq3.22}) leads
directly to some accurate resolvent estimates for $q^w$, in parabolic regions near the imaginary axis.

\section{Smoothing estimates for the semigroup}
\setcounter{equation}{0}
In this section, we shall establish Theorem \ref{theo_main2}. When doing so, we shall first continue to work on the FBI transform side, and let us recall from
(\ref{eq2.29}), (\ref{eq2.30}), that we have a bounded operator
\begeq
\label{eq4.1}
e^{-tQ}: H_{\Phi_0}(\comp^n) \rightarrow H_{\widetilde{\Phi}_t}(\comp^n),
\endeq
for all $t\in [0,t_0]$, with $t_0 > 0$ sufficiently small, with the operator norm in (\ref{eq4.1}) being bounded uniformly in $t\in [0,t_0]$. Here
$\widetilde{\Phi}_t$ is a real strictly plurisubharmonic quadratic form on $\comp^n$ such that
\begeq
\label{eq4.2}
\widetilde{\Phi}_t(x) \leq \Phi_0(x) - \frac{t^{2k_0+1}}{C}\abs{x}^2,\quad x\in \comp^n,\quad t\in [0,t_0].
\endeq
Let $p_0(x,\xi) = x^2 + \xi^2$ be the symbol of the harmonic oscillator, and as in (\ref{eq2.71}), set $q_0 = p_0 \circ \kappa_T^{-1}$.
In what follows, we shall rely only on the observation that $q_0$ is a holomorphic quadratic form on $\comp^{2n}$ such that its restriction to
$\Lambda_{\Phi_0}$ is real positive definite. Let us set $Q_0 = q_0^w(x,D_x)$. The quadratic differential operator $Q_0$ is selfadjoint on
$H_{\Phi_0}(\comp^n)$, with the domain
$$
{\cal D}(Q_0) = \{u\in H_{\Phi_0}(\comp^n);\, (1+\abs{x}^2)u\in L^2_{\Phi}(\comp^n)\}.
$$
Proceeding similarly to the discussion in Section 2, we shall now consider the operator $e^{sQ_0}$, for
$s\in [0,s_0]$, with $s_0 > 0$ small enough, acting on $H_{\widetilde{\Phi}_t}(\comp^n)$. Studying the evolution problem
$$
\left(\partial_s - Q_0\right)u(s,x)=0,\quad u|_{s=0} = u_0\in H_{\widetilde{\Phi}_t}(\comp^n),
$$
and arguing as in Section 2, we see that for $s\in [0,s_0]$, with $s_0 > 0$ small enough, and all $t\in [0,t_0]$, the operator $e^{s Q_0}$ is bounded
\begeq
\label{eq4.3}
e^{s Q_0}: H_{\widetilde{\Phi}_t}(\comp^n) \rightarrow H_{\widetilde{\Phi}_{t,s}}(\comp^n).
\endeq
Here $\widetilde{\Phi}_{t,s}$ is a strictly plurisubharmonic quadratic form on $\comp^n$, depending smoothly on $t\geq 0$ and $s\geq 0$ small enough, such that
\begeq
\label{eq4.4}
\frac{\partial \widetilde{\Phi}_{t,s}}{\partial s}(x) - {\rm Re}\,q_0\left(x,\frac{2}{i}\frac{\partial \widetilde{\Phi}_{t,s}}{\partial x}(x)\right) = 0,\quad
\widetilde{\Phi}|_{t,s=0} = \widetilde{\Phi}_t.
\endeq
It follows that
\begeq
\label{eq4.5}
\widetilde{\Phi}_{t,s}(x) = \widetilde{\Phi}_t(x) + {\cal O}(s\abs{x}^2),
\endeq
uniformly for $t\in [0,t_0]$, with $t_0 > 0$ sufficiently small. Choosing
\begeq
\label{eq4.6}
s = s(t) = \frac{t^{2k_0+1}}{C_0},
\endeq
where the constant $C_0$ is large enough, we conclude, in view of (\ref{eq4.2}), (\ref{eq4.3}), and (\ref{eq4.5}), that for all
$t\in [0,t_0]$, the operator $e^{s(t) Q_0}$ is bounded,
\begeq
\label{eq4.7}
e^{s(t) Q_0}: H_{\widetilde{\Phi}_t}(\comp^n) \rightarrow H_{\Phi_0}(\comp^n).
\endeq
Combining this observation with (\ref{eq4.1}), we conclude that for all $t\in [0,t_0]$, the composition
\begeq
\label{eq4.8}
e^{s(t) Q_0} e^{-t Q}: H_{\Phi_0}(\comp^n) \rightarrow H_{\Phi_0}(\comp^n),
\endeq
is bounded, with the operator norm being bounded uniformly, for $t\in [0,t_0]$. For future reference, coming back to the real side,
let us summarize the discussion so far in the following result.

\begin{prop}
Let $q: \real^n_x \times \real^n_{\xi} \rightarrow \comp$ be a quadratic form with ${\rm Re}\,q \geq 0$, such that {\rm (\ref{eq1.51})} holds, and let us
define $k_0 \in \nat$ as in {\rm (\ref{eq1.7})}. There exist $C_0>0$ and $t_0 > 0$ such that for all $t\in [0,t_0]$, we have a bounded operator
$$
e^{\frac{t^{2k_0+1}}{C_0}(D_x^2 + x^2)} e^{-tq^w}: L^2(\real^n) \rightarrow L^2(\real^n),
$$
with the norm in ${\cal L}(L^2(\real^n),L^2(\real^n))$ bounded uniformly, for $t\in [0,t_0]$.
\end{prop}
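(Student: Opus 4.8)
The plan is to read the statement as the real-side reformulation of the boundedness result (\ref{eq4.8}) already obtained on the FBI transform side, and to prove it simply by conjugating with the metaplectic FBI--Bargmann transform $T$. Since $T:L^2(\real^n)\to H_{\Phi_0}(\comp^n)$ is unitary, the exact Egorov property (\ref{eq2.7})--(\ref{eq2.71}) gives $Q=Tq^w(y,D_y)T^{-1}$, and because $q_0=p_0\circ\kappa_T^{-1}$ with $p_0(x,\xi)=x^2+\xi^2$, the same identity yields $Q_0=T(D_y^2+y^2)T^{-1}$. The corresponding (one-parameter) families of operators conjugate in the same way, so that $e^{s(t)Q_0}e^{-tQ}=T\,e^{s(t)(D_x^2+x^2)}e^{-tq^w}\,T^{-1}$ with $s(t)=t^{2k_0+1}/C_0$. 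Hence the operator norm of $e^{s(t)(D_x^2+x^2)}e^{-tq^w}$ on $L^2(\real^n)$ equals that of $e^{s(t)Q_0}e^{-tQ}$ on $H_{\Phi_0}(\comp^n)$, and it suffices to check that the latter is bounded uniformly for $t\in[0,t_0]$, provided $C_0>0$ is chosen large enough.

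To see this I would assemble the two ingredients prepared in the discussion above. First, by the Remark at the end of Section 2, equations (\ref{eq2.29})--(\ref{eq2.30}) (a sharpening of \cite[Lemma 3.1.2]{HPS09}, itself a consequence of Theorem \ref{theo_sec2}), the operator $e^{-tQ}:H_{\Phi_0}(\comp^n)\to H_{\widetilde{\Phi}_t}(\comp^n)$ is bounded with norm uniformly controlled for $t\in[0,t_0]$, where $\widetilde{\Phi}_t$ is a strictly plurisubharmonic quadratic form with $\widetilde{\Phi}_t(x)\le\Phi_0(x)-t^{2k_0+1}\abs{x}^2/C$. Second, arguing for the holomorphic quadratic operator $Q_0$ exactly as for $Q$ in Section 2 --- using only that the restriction of $q_0$ to $\Lambda_{\Phi_0}$ is real positive definite --- the solution operator $e^{sQ_0}$ of $(\partial_s-Q_0)u=0$ is a quadratic Fourier integral operator in the complex domain that is bounded $e^{sQ_0}:H_{\widetilde{\Phi}_t}(\comp^n)\to H_{\widetilde{\Phi}_{t,s}}(\comp^n)$ for $s\in[0,s_0]$, with $\widetilde{\Phi}_{t,s}$ governed by the real Hamilton--Jacobi equation (\ref{eq4.4}).

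The technical heart of the argument, and the step I expect to require the most care, is the uniform-in-$t$ estimate (\ref{eq4.5}), i.e. $\widetilde{\Phi}_{t,s}(x)=\widetilde{\Phi}_t(x)+{\cal O}(s\abs{x}^2)$. The point is that since $t\in[0,t_0]$ the forms $\widetilde{\Phi}_t$ range over a bounded set of strictly plurisubharmonic quadratic forms (being trapped between $\Phi_0-t_0^{2k_0+1}\abs{x}^2/C$ and $\Phi_0$), so the Hamilton--Jacobi flow defining $\widetilde{\Phi}_{t,s}$ is well defined and stays strictly plurisubharmonic for $s$ in an interval $[0,s_0]$ whose length can be taken independent of $t$. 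Integrating (\ref{eq4.4}) in $s$ and using that $\mathrm{Re}\,q_0(x,\tfrac{2}{i}\partial_x\widetilde{\Phi}_{t,s}(x))$ is a quadratic form in $x$ bounded by ${\cal O}(\abs{x}^2)$ uniformly over this bounded family then gives (\ref{eq4.5}) with a uniform constant; this is the only place where one must be attentive to uniformity.

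With (\ref{eq4.5}) available, I would choose $s(t)=t^{2k_0+1}/C_0$ with $C_0$ so large that, for all $t\in[0,t_0]$, $\widetilde{\Phi}_{t,s(t)}(x)\le\widetilde{\Phi}_t(x)+\tfrac{C'}{C_0}t^{2k_0+1}\abs{x}^2\le\Phi_0(x)-\tfrac{1}{C}t^{2k_0+1}\abs{x}^2+\tfrac{C'}{C_0}t^{2k_0+1}\abs{x}^2\le\Phi_0(x)$. Since $\widetilde{\Phi}_{t,s(t)}\le\Phi_0$, the natural inclusion $H_{\widetilde{\Phi}_{t,s(t)}}(\comp^n)\hookrightarrow H_{\Phi_0}(\comp^n)$ is norm-nonincreasing, so $e^{s(t)Q_0}:H_{\widetilde{\Phi}_t}(\comp^n)\to H_{\Phi_0}(\comp^n)$ is bounded uniformly in $t$, which is (\ref{eq4.7}). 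Composing with the uniformly bounded operator $e^{-tQ}:H_{\Phi_0}(\comp^n)\to H_{\widetilde{\Phi}_t}(\comp^n)$ gives (\ref{eq4.8}), and conjugating back by $T$ as in the first paragraph yields the claimed uniform $L^2(\real^n)$ bound for $e^{\frac{t^{2k_0+1}}{C_0}(D_x^2+x^2)}e^{-tq^w}$, completing the proof.
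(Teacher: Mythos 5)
Your proposal is correct and follows essentially the same route as the paper: the paper's proof is precisely the discussion preceding the Proposition, namely the mapping property (\ref{eq4.1})--(\ref{eq4.2}) for $e^{-tQ}$, the Hamilton--Jacobi evolution (\ref{eq4.4})--(\ref{eq4.5}) for $e^{sQ_0}$, the choice $s(t)=t^{2k_0+1}/C_0$ giving (\ref{eq4.7})--(\ref{eq4.8}), and conjugation back by the unitary FBI transform $T$. Your added attention to the uniformity in $t$ of (\ref{eq4.5}) is a fair elaboration of a point the paper states without detail.
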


\medskip
\noindent
{\it Remark}. The discussion in Section 2 shows that the analogs of (\ref{eq4.1}), (\ref{eq4.2}) are also valid for the adjoint semigroup,
$$
e^{-tQ^*}: H_{\Phi_0}(\comp^n) \rightarrow H_{\Phi_0}(\comp^n),\quad t\in [0,t_0],
$$
for $t_0 > 0$ small enough, and therefore, an application of Proposition 4.1 shows that we have a bounded operator
$$
e^{\frac{t^{2k_0+1}}{C_0}(D_x^2 + x^2)} e^{-t\overline{q}^w}: L^2(\real^n) \rightarrow L^2(\real^n),
$$
uniformly for $t\in [0,t_0]$. Let us introduce the domain ${\cal D} = {\cal D}(e^{\frac{t^{2k_0+1}}{C_0}(D_x^2 + x^2)}) \subseteq L^2(\real^n)$ of the
unbounded selfadjoint operator $e^{\frac{t^{2k_0+1}}{C_0}(D_x^2 + x^2)}$ on $L^2(\real^n)$, and notice that $v\in L^2(\real^n)$ belongs to ${\cal D}$
precisely when
$$
(v,\psi_{\alpha})_{L^2}\, \exp\left(\frac{2t^{2k_0+1}}{C_0}\abs{\alpha}\right) \in l^2(\nat^n).
$$
Here $\psi_{\alpha}$ are the Hermite functions, see (\ref{eq4.11}) below. In particular, we notice that ${\cal D}$ is dense in $L^2(\real^n)$, since
it contains all finite sums of the Hermite functions. When $u\in L^2(\real^n)$ and $v\in {\cal D}$, we write for $t\in [0,t_0]$,
\begeq
\label{eq4.81}
\abs{(e^{\frac{t^{2k_0+1}}{C_0}(D_x^2 + x^2)} e^{-t\overline{q}^w}u,v)_{L^2}} \leq {\cal O}(1)\norm{u}_{L^2}\norm{v}_{L^2}.
\endeq
The scalar product in the left hand side in (\ref{eq4.81}) is equal to
$$
(u, e^{-tq^w} e^{\frac{t^{2k_0+1}}{C_0}(D_x^2 + x^2)} v)_{L^2},
$$
and therefore we get
$$
\norm{e^{-tq^w} e^{\frac{t^{2k_0+1}}{C_0}(D_x^2 + x^2)} v}_{L^2} \leq {\cal O}(1)\norm{v}_{L^2},\quad t\in [0,t_0].
$$
Using the density of ${\cal D}$ in $L^2(\real^n)$, we conclude that the following operator is bounded on $L^2(\real^n)$, uniformly for $t\in [0,t_0]$,
$$
e^{-tq^w} e^{\frac{t^{2k_0+1}}{C_0}(D_x^2 + x^2)}: L^2(\real^n) \rightarrow L^2(\real^n).
$$

\bigskip
\noindent
Returning to (\ref{eq4.8}), we shall now estimate the norm of the operator $Q_0^N e^{-tQ}$, viewed as a bounded operator on $H_{\Phi_0}(\comp^n)$,
when $N\in \nat$. Writing
$$
Q_0^N e^{-tQ} = Q_0^N e^{-s(t) Q_0} e^{s(t) Q_0} e^{-t Q},
$$
where the operators involved are quadratic Fourier integral operators in the complex domain, and recalling the uniform boundedness of (\ref{eq4.8}),
we obtain that there exists a constant $C > 0$ such that for all $t\in [0,t_0]$ we have
\begeq
\label{eq4.9}
\norm{Q_0^N e^{-tQ}}_{{\cal L}(H_{\Phi_0}({\bf C}^n), H_{\Phi_0}({\bf C}^n))} \leq C
\norm{Q_0^N e^{-s(t)Q_0}}_{{\cal L}(H_{\Phi_0}({\bf C}^n), H_{\Phi_0}({\bf C}^n))}.
\endeq
By the selfadjoint functional calculus, we have
\begeq
\label{eq4.10}
\norm{Q_0^N e^{-s(t) Q_0}}_{{\cal L}(H_{\Phi_0}({\bf C}^n), H_{\Phi_0}({\bf C}^n))} \leq {\rm sup}_{\lambda \geq 0}
\left(\lambda^N e^{-s(t) \lambda}\right) \leq \frac{N!}{s(t)^N},
\endeq
and we get the smoothing estimate
$$
\norm{Q_0^N e^{-tQ}}_{{\cal L}(H_{\Phi_0}({\bf C}^n), H_{\Phi_0}({\bf C}^n))} \leq \frac{C^{N+1}N!}{t^{(2k_0+1)N}},\quad 0 < t \leq t_0,
$$
valid for some $C>0$ and all $N\in \nat$. Undoing the FBI transform and coming back to the $L^2(\real^n)$--side, we see that we have proved the bound
(\ref{eq1.9}) in Theorem \ref{theo_main2}.

\medskip
\noindent
{\it Remark}. The work~\cite{AlVi} also examines the semigroup $e^{-tq^w}$ on the FBI transform side, but focusing on a particular FBI transform and an
FBI-side harmonic oscillator $P_0$ adapted to the symbol $q$. Combining Theorems 2.10, 3.8, and 4.8 in~\cite{AlVi} gives a result similar to Proposition 4.1,
which reads that
$$
e^{\frac{t^{2k_0 + 1}}{C_0}P_0}e^{-tQ} \in {\cal L}(H_{\Phi_0}(\comp^n),H_{\Phi_0}(\comp^n)),\quad t\in [0,t_0],\quad 0 < t_0 \ll 1,
$$
and that $P_0$ may be replaced by the usual harmonic oscillator on the real side. Examining the proof of Theorem 2.10 shows that the operator norm of this
composition is ${\cal O}(1)$, for $t\geq 0$ sufficiently small.

\bigskip
\noindent
\color{black}
It remains for us to finish the proof of Theorem 1.2, by deriving the Gelfand-Shilov bounds (\ref{eq1.10}) on the heat semigroup $e^{-tq^w}$. When doing so,
we shall combine Proposition 4.1 with some arguments of~\cite{LMPSXu}. Let us write $P = D_x^2 + x^2$ on $L^2(\real^n)$, and
let us recall that the spectrum of $P$ is given by the eigenvalues
$\lambda_{\alpha} = 2\abs{\alpha} + n$, $\alpha \in \nat^n$, and the corresponding eigenfunctions are the Hermite functions,
\begeq
\label{eq4.11}
\psi_{\alpha}(x) = H_{\alpha}(x) e^{-x^2/2},\quad \alpha \in \nat^n,
\endeq
forming an orthonormal basis of $L^2(\real^n)$. Here the Hermite polynomials $H_{\alpha}(x)$ satisfy
$$
H_{\alpha}(x) = \prod_{j=1}^n H_{\alpha_j}(x_j).
$$
Let $u \in L^2(\real^n)$. Setting $u(t) = e^{-tq^w} u$, $t\geq 0$, we can write
\begeq
\label{eq4.11.1}
u(t) = \sum_{\alpha} a_{\alpha}(t) \psi_{\alpha}, \quad a_{\alpha}(t) = (u(t),\psi_{\alpha})_{L^2}.
\endeq
An application of Proposition 4.1 shows that
$$
\sum_{\alpha} \abs{a_{\alpha}(t)}^2 e^{\frac{2t^{2k_0 + 1}}{C_0}\lambda_{\alpha}} \leq {\cal O}(1)\norm{u}_{L^2({\bf R}^n)}^2,
$$
for all $t\in [0,t_0]$, so that for all $\alpha \in \nat^n$, we have
\begeq
\label{eq4.12}
\abs{a_{\alpha}(t)} \leq {\cal O}(1) e^{-\frac{2t^{2k_0 +1}\abs{\alpha}}{C_0}}\norm{u}_{L^2({\bf R}^n)}.
\endeq
Using~\cite[Lemma A.1]{LMPSXu}, we observe next that there exists a constant $C > 0$ such that for all $\varepsilon \in (0,1/2)$
and all $\mu,\nu \in \nat^n$, we have
\begeq
\label{eq4.13}
\norm{x^{\mu}\partial_x^{\nu}\psi_{\alpha}}_{L^2({\bf R}^n)} \leq C^{1 + \abs{\mu} + \abs{\nu}} \left(\mu!\right)^{1/2} \left(\nu!\right)^{1/2}
\frac{e^{\varepsilon\abs{\alpha}}}{\eps^{(\abs{\mu} + \abs{\nu})/2}}.
\endeq
Combining (\ref{eq4.11.1}), (\ref{eq4.12}), and (\ref{eq4.13}), we get
\begin{multline*}
\norm{x^{\mu}\partial_x^{\nu}u(t)}_{L^2({\bf R}^n)} \leq
\frac{C^{1 + \abs{\mu} + \abs{\nu}} \left(\mu!\right)^{1/2} \left(\nu!\right)^{1/2}}{\eps^{({\abs{\mu} + \abs{\nu})/2}}}
\sum_{\alpha} \exp\left(\eps \abs{\alpha} - \frac{2t^{2k_0+1}}{C_0}\abs{\alpha}\right)\norm{u}_{L^2({\bf R}^n)}.
\end{multline*}
Choosing
$$
\eps =  \frac{t^{2k_0+1}}{C_0},
$$
we conclude that with a new constant $C$,
\begeq
\label{eq4.13.1}
\norm{x^{\mu}\partial_x^{\nu}u(t)}_{L^2({\bf R}^n)} \leq
\frac{C^{1 + \abs{\mu} + \abs{\nu}} \left(\mu!\right)^{1/2} \left(\nu!\right)^{1/2}}{t^{\frac{(2k_0+1)}{2}(\abs{\mu} + \abs{\nu})}}
F\left(\frac{t^{2k_0 +1}}{C_0}\right)\norm{u}_{L^2({\bf R}^n)}, \quad 0 < t \leq t_0,
\endeq
for $t_0 > 0$ sufficiently small. Here
$$
F(y) = \sum_{\alpha} e^{-y\abs{\alpha}},\quad y>0.
$$
When estimating $F(y)$ as $y\rightarrow 0$, we notice that
\begeq
\label{eq4.13.2}
F(y) = \frac{1}{(n-1)!}\sum_{m=0}^{\infty} (m+1)\ldots\,(m+n-1)e^{-ym} \leq C_n \left( 1 + \sum_{m=0}^{\infty} m^{n-1} e^{-ym}\right),
\endeq
where the constant $C_n > 0$ depends on $n$ only. Here
$$
\sum_{m=0}^{\infty} m^{n-1}e^{-ym} = \left(-\frac{d}{dy}\right)^{n-1}\sum_{m=0}^{\infty} e^{-ym} = \left(-\frac{d}{dy}\right)^{n-1} \frac{e^y}{e^y -1},
$$
and an elementary argument allows us therefore to conclude that
\begeq
\label{eq4.14}
F(y) \leq {\cal O}_n(1) y^{-n},\quad 0 < y \leq 1.
\endeq
Combining (\ref{eq4.13.1}) and (\ref{eq4.14}), we get
\begeq
\label{eq4.15}
\norm{x^{\mu}\partial_x^{\nu}u(t)}_{L^2({\bf R}^n)} \leq \frac{C^{1 + \abs{\mu} + \abs{\nu}} \left(\mu!\right)^{1/2} \left(\nu!\right)^{1/2}}
{t^{\frac{(2k_0+1)}{2}(\abs{\mu} + \abs{\nu} +2n)}} \norm{u}_{L^2({\bf R}^n)}, \quad 0 < t \leq t_0.
\endeq
To pass to the $L^{\infty}$--norms, it suffices to apply the Sobolev embedding theorem,
\begeq
\label{eq4.16}
\norm{x^{\mu}\partial_x^{\nu}u(t)}_{L^{\infty}({\bf R}^n)} \leq {\cal O}(1) \sum_{\abs{\alpha} \leq s}
\norm{D^{\alpha}\left(x^{\mu}\partial_x^{\nu}u(t)\right)}_{L^2({\bf R}^n)},
\endeq
where $s > n/2$ is an integer. The estimate (\ref{eq1.10}) follows from (\ref{eq4.15}) and (\ref{eq4.16}), and this
completes the proof of Theorem \ref{theo_main2}.

\end{document}